\begin{document}

\allowdisplaybreaks


\title[Automorphisms of Commuting Polynomial Maps]
      {Automorphism Groups of Commuting Polynomial Maps of the Affine Plane}
\date{\today}
\author[J.H. Silverman]{Joseph H. Silverman}
\email{joseph\_silverman@brown.edu}
\address{Mathematics Department, Box 1917
  Brown University, Providence, RI 02912 USA.
  ORCID: https://orcid.org/0000-0003-3887-3248}

\subjclass[2010]{Primary: 37P05; Secondary: 32H50, 37F10, 37J37, 37J70}
\keywords{algebraic dynamics, commuting maps, folding polynomial, Lie algebra}


\hyphenation{ca-non-i-cal semi-abel-ian}


\newtheorem{theorem}{Theorem}[section]
\newtheorem{lemma}[theorem]{Lemma}
\newtheorem{sublemma}[theorem]{Sublemma}
\newtheorem{conjecture}[theorem]{Conjecture}
\newtheorem{proposition}[theorem]{Proposition}
\newtheorem{corollary}[theorem]{Corollary}
\newtheorem{assumption}[theorem]{Assumption}
\newtheorem*{claim}{Claim}

\theoremstyle{definition}
\newtheorem{definition}[theorem]{Definition}
\newtheorem*{intuition}{Intuition}
\newtheorem{example}[theorem]{Example}
\newtheorem{remark}[theorem]{Remark}
\newtheorem{question}[theorem]{Question}

\theoremstyle{remark}
\newtheorem*{acknowledgement}{Acknowledgements}


\newenvironment{notation}[0]{%
  \begin{list}%
    {}%
    {\setlength{\itemindent}{0pt}
     \setlength{\labelwidth}{4\parindent}
     \setlength{\labelsep}{\parindent}
     \setlength{\leftmargin}{5\parindent}
     \setlength{\itemsep}{0pt}
     }%
   }%
  {\end{list}}

\newenvironment{parts}[0]{%
  \begin{list}{}%
    {\setlength{\itemindent}{0pt}
     \setlength{\labelwidth}{1.5\parindent}
     \setlength{\labelsep}{.5\parindent}
     \setlength{\leftmargin}{2\parindent}
     \setlength{\itemsep}{0pt}
     }%
   }%
  {\end{list}}
\newcommand{\Part}[1]{\item[\upshape#1]}

\def\Case#1#2{%
\paragraph{\textbf{\boldmath Case #1: #2.}}\hfil\break\ignorespaces}

\renewcommand{\a}{\alpha}
\renewcommand{\b}{\beta}
\newcommand{\g}{\gamma}
\renewcommand{\d}{\delta}
\newcommand{\e}{\epsilon}
\newcommand{\f}{\varphi}
\newcommand{\fhat}{\hat\varphi}
\newcommand{\bfphi}{{\boldsymbol{\f}}}
\renewcommand{\l}{\lambda}
\renewcommand{\k}{\kappa}
\newcommand{\bfkappa}{{\boldsymbol{\kappa}}}
\newcommand{\lhat}{\hat\lambda}
\newcommand{\m}{\mu}
\newcommand{\bfmu}{{\boldsymbol{\mu}}}
\renewcommand{\o}{\omega}
\newcommand{\bfo}{{\boldsymbol{\omega}}}
\newcommand{\bfpi}{{\boldsymbol{\pi}}}
\renewcommand{\r}{\rho}
\newcommand{\bfrho}{{\boldsymbol{\rho}}}
\newcommand{\rbar}{{\bar\rho}}
\newcommand{\s}{\sigma}
\newcommand{\sbar}{{\bar\sigma}}
\newcommand{\bfsigma}{{\boldsymbol{\sigma}}}
\renewcommand{\t}{\tau}
\newcommand{\z}{\zeta}
\newcommand{\bfzeta}{{\boldsymbol\zeta}}

\newcommand{\D}{\Delta}
\newcommand{\G}{\Gamma}
\newcommand{\F}{\Phi}
\renewcommand{\L}{\Lambda}

\newcommand{\ga}{{\mathfrak{a}}}
\newcommand{\gb}{{\mathfrak{b}}}
\newcommand{\frakF}{{\mathfrak{F}}}
\newcommand{\frakg}{{\mathfrak{g}}}
\newcommand{\frakh}{{\mathfrak{h}}}
\newcommand{\gM}{{\mathfrak{M}}}
\newcommand{\gn}{{\mathfrak{n}}}
\newcommand{\gp}{{\mathfrak{p}}}
\newcommand{\gP}{{\mathfrak{P}}}
\newcommand{\gq}{{\mathfrak{q}}}
\newcommand{\frakt}{{\mathfrak{t}}}

\newcommand{\Abar}{{\bar A}}
\newcommand{\Bbar}{{\bar B}}
\newcommand{\Cbar}{{\bar C}}
\newcommand{\Ebar}{{\bar E}}
\newcommand{\Gbar}{{\bar G}}
\newcommand{\kbar}{{\bar k}}
\newcommand{\Kbar}{{\bar K}}
\newcommand{\Pbar}{{\bar P}}
\newcommand{\Sbar}{{\bar S}}
\newcommand{\Tbar}{{\bar T}}
\newcommand{\gbar}{{\bar\gamma}}
\newcommand{\lbar}{{\bar\lambda}}
\newcommand{\ybar}{{\bar y}}
\newcommand{\phibar}{{\bar\f}}
\newcommand{\nubar}{{\overline\nu}}

\newcommand{\Acal}{{\mathcal A}}
\newcommand{\Bcal}{{\mathcal B}}
\newcommand{\Ccal}{{\mathcal C}}
\newcommand{\Dcal}{{\mathcal D}}
\newcommand{\Ecal}{{\mathcal E}}
\newcommand{\Fcal}{{\mathcal F}}
\newcommand{\Gcal}{{\mathcal G}}
\newcommand{\Hcal}{{\mathcal H}}
\newcommand{\Ical}{{\mathcal I}}
\newcommand{\Jcal}{{\mathcal J}}
\newcommand{\Kcal}{{\mathcal K}}
\newcommand{\Lcal}{{\mathcal L}}
\newcommand{\Mcal}{{\mathcal M}}
\newcommand{\Ncal}{{\mathcal N}}
\newcommand{\Ocal}{{\mathcal O}}
\newcommand{\Pcal}{{\mathcal P}}
\newcommand{\Qcal}{{\mathcal Q}}
\newcommand{\Rcal}{{\mathcal R}}
\newcommand{\Scal}{{\mathcal S}}
\newcommand{\Tcal}{{\mathcal T}}
\newcommand{\Ucal}{{\mathcal U}}
\newcommand{\Vcal}{{\mathcal V}}
\newcommand{\Wcal}{{\mathcal W}}
\newcommand{\Xcal}{{\mathcal X}}
\newcommand{\Ycal}{{\mathcal Y}}
\newcommand{\Zcal}{{\mathcal Z}}

\renewcommand{\AA}{\mathbb{A}}
\newcommand{\BB}{\mathbb{B}}
\newcommand{\CC}{\mathbb{C}}
\newcommand{\FF}{\mathbb{F}}
\newcommand{\GG}{\mathbb{G}}
\newcommand{\NN}{\mathbb{N}}
\newcommand{\PP}{\mathbb{P}}
\newcommand{\QQ}{\mathbb{Q}}
\newcommand{\RR}{\mathbb{R}}
\newcommand{\ZZ}{\mathbb{Z}}

\newcommand{\bfa}{{\boldsymbol a}}
\newcommand{\bfb}{{\boldsymbol b}}
\newcommand{\bfc}{{\boldsymbol c}}
\newcommand{\bfd}{{\boldsymbol d}}
\newcommand{\bfe}{{\boldsymbol e}}
\newcommand{\bff}{{\boldsymbol f}}
\newcommand{\bfg}{{\boldsymbol g}}
\newcommand{\bfi}{{\boldsymbol i}}
\newcommand{\bfj}{{\boldsymbol j}}
\newcommand{\bfk}{{\boldsymbol k}}
\newcommand{\bfm}{{\boldsymbol m}}
\newcommand{\bfn}{{\boldsymbol n}}
\newcommand{\bfp}{{\boldsymbol p}}
\newcommand{\bfq}{{\boldsymbol q}}
\newcommand{\bfr}{{\boldsymbol r}}
\newcommand{\bfs}{{\boldsymbol s}}
\newcommand{\bft}{{\boldsymbol t}}
\newcommand{\bfu}{{\boldsymbol u}}
\newcommand{\bfv}{{\boldsymbol v}}
\newcommand{\bfw}{{\boldsymbol w}}
\newcommand{\bfx}{{\boldsymbol x}}
\newcommand{\bfy}{{\boldsymbol y}}
\newcommand{\bfz}{{\boldsymbol z}}
\newcommand{\bfA}{{\boldsymbol A}}
\newcommand{\bfF}{{\boldsymbol F}}
\newcommand{\bfB}{{\boldsymbol B}}
\newcommand{\bfD}{{\boldsymbol D}}
\newcommand{\bfG}{{\boldsymbol G}}
\newcommand{\bfI}{{\boldsymbol I}}
\newcommand{\bfM}{{\boldsymbol M}}
\newcommand{\bfP}{{\boldsymbol P}}
\newcommand{\bfQ}{{\boldsymbol Q}}
\newcommand{\bfS}{{\boldsymbol S}}
\newcommand{\bfT}{{\boldsymbol T}}
\newcommand{\bfU}{{\boldsymbol U}}
\newcommand{\bfX}{{\boldsymbol X}}
\newcommand{\bfY}{{\boldsymbol Y}}
\newcommand{\bfzero}{{\boldsymbol{0}}}
\newcommand{\bfone}{{\boldsymbol{1}}}

\newcommand{\ad}{\operatorname{ad}}
\newcommand{\AffineAut}{\operatorname{\textup{\textsf{AffAut}}}}
\newcommand{\Aut}{\operatorname{Aut}}
\newcommand{\Berk}{{\textup{Berk}}}
\newcommand{\Birat}{\operatorname{Birat}}
\newcommand{\characteristic}{\operatorname{char}}
\newcommand{\codim}{\operatorname{codim}}
\newcommand{\Coef}{\operatorname{\textup{\textsf{Coef}}}}
\newcommand{\Commute}{\operatorname{\textup{\textsf{Com}}}}
\newcommand{\Crit}{\operatorname{Crit}}
\newcommand{\crit}{{\textup{crit}}}
\newcommand{\Deg}{\operatorname{\textup{\textsf{Deg}}}}
\newcommand{\diag}{\operatorname{diag}}
\newcommand{\Disc}{\operatorname{Disc}}
\newcommand{\Div}{\operatorname{Div}}
\newcommand{\Dom}{\operatorname{Dom}}
\newcommand{\dyn}{{\textup{dyn}}}
\newcommand{\End}{\operatorname{End}}
\newcommand{\Fbar}{{\bar{F}}}
\newcommand{\Fix}{\operatorname{Fix}}
\newcommand{\Fiber}{\operatorname{Fiber}}
\newcommand{\Fold}{{\textup{\textsf{F}}}}
\newcommand{\FOD}{\operatorname{FOD}}
\newcommand{\FOM}{\operatorname{FOM}}
\newcommand{\Gal}{\operatorname{Gal}}
\newcommand{\GITQuot}{/\!/}
\newcommand{\GL}{\operatorname{GL}}
\newcommand{\Hom}{\operatorname{Hom}}
\newcommand{\Index}{\operatorname{Index}}
\newcommand{\Imag}{\operatorname{\textup{\textsf{Im}}}}
\newcommand{\Image}{\operatorname{Image}}
\newcommand{\Isom}{\operatorname{Isom}}
\newcommand{\hhat}{{\hat h}}
\newcommand{\Ker}{{\operatorname{ker}}}
\newcommand{\Length}{\operatorname{Length}}
\newcommand{\Lie}[1]{{\mathcal{#1}}}
\newcommand{\Lift}{\operatorname{Lift}}
\newcommand{\Mat}{\operatorname{Mat}}
\newcommand{\maxplus}{\operatornamewithlimits{\textup{max}^{\scriptscriptstyle+}}}
\newcommand{\MOD}[1]{~(\textup{mod}~#1)}
\newcommand{\Mor}{\operatorname{Mor}}
\newcommand{\Moduli}{\mathcal{M}}
\newcommand{\MODULI}{\overline{\mathcal{M}}}
\newcommand{\Norm}{{\operatorname{\mathsf{N}}}}
\newcommand{\notdivide}{\nmid}
\newcommand{\normalsubgroup}{\triangleleft}
\newcommand{\NS}{\operatorname{NS}}
\newcommand{\onto}{\twoheadrightarrow}
\newcommand{\ord}{\operatorname{ord}}
\newcommand{\Orbit}{\mathcal{O}}
\newcommand{\Per}{\operatorname{Per}}
\newcommand{\Perp}{\operatorname{Perp}}
\newcommand{\PrePer}{\operatorname{PrePer}}
\newcommand{\PGL}{\operatorname{PGL}}
\newcommand{\Pic}{\operatorname{Pic}}
\newcommand{\prim}{\textup{prim}}
\newcommand{\Prob}{\operatorname{Prob}}
\newcommand{\Proj}{\operatorname{Proj}}
\newcommand{\Qbar}{{\bar{\QQ}}}
\newcommand{\rank}{\operatorname{rank}}
\newcommand{\Rat}{\operatorname{Rat}}
\newcommand{\Real}{\operatorname{\textup{\textsf{Re}}}}
\newcommand{\Resultant}{\operatorname{Res}}
\newcommand{\Residue}{\operatorname{Residue}} 
\renewcommand{\setminus}{\smallsetminus}
\newcommand{\sgn}{\operatorname{sgn}}
\newcommand{\SL}{\operatorname{SL}}
\newcommand{\Span}{\operatorname{Span}}
\newcommand{\Spec}{\operatorname{Spec}}
\renewcommand{\ss}{{\textup{ss}}}
\newcommand{\stab}{{\textup{stab}}}
\newcommand{\Stab}{\operatorname{Stab}}
\newcommand{\Support}{\operatorname{Supp}}
\newcommand{\Sym}{\operatorname{Sym}}  
\newcommand{\Top}{\operatorname{\textup{\textsf{Top}}}}
\newcommand{\tors}{{\textup{tors}}}
\newcommand{\Trace}{\operatorname{Trace}}
\newcommand{\trianglebin}{\mathbin{\triangle}} 
\newcommand{\tr}{{\textup{tr}}} 
\newcommand{\UHP}{{\mathfrak{h}}}    
\newcommand{\val}{\operatorname{val}} 
\newcommand{\wt}{\operatorname{wt}} 
\newcommand{\zbar}{\overline{z}}
\newcommand{\barz}{\overline{z}}
\newcommand{\<}{\langle}
\renewcommand{\>}{\rangle}

\newcommand{\pmodintext}[1]{~\textup{(mod}~#1\textup{)}}
\newcommand{\ds}{\displaystyle}
\newcommand{\longhookrightarrow}{\lhook\joinrel\longrightarrow}
\newcommand{\longonto}{\relbar\joinrel\twoheadrightarrow}
\newcommand{\SmallMatrix}[1]{%
  \left(\begin{smallmatrix} #1 \end{smallmatrix}\right)}

\newcommand{\TABT}[1]{\begin{tabular}[t]{@{}l@{}}#1\end{tabular}}
\newcommand{\TAB}[1]{\begin{tabular}{@{}l@{}}#1\end{tabular}}
\newcommand{\TABC}[1]{\begin{tabular}{@{}c@{}}#1\end{tabular}}
\newcommand{\ARRAY}[1]{\begin{array}{@{}l@{}}#1\end{array}}

\def\COMMENT#1{}  

\definecolor{hdegcolor}{RGB}{255, 0, 255}    
\definecolor{hdegcolor}{RGB}{1, 50, 32}    
\newcommand{\hdegup}[2]{\overbrace{{\color{black}#2}\strut}^{\color{hdegcolor}\strut\ovalbox{\textup{{$\scriptscriptstyle#1$}}}}}
\newcommand{\hdegdown}[2]{\underbrace{{\color{black}#2}\strut}_{\color{hdegcolor}\strut\ovalbox{\textup{{$\scriptscriptstyle#1$}}}}}
\newcommand{\hdeguphw}[2]{\overbrace{{\color{black}#2}\strut}^{\hidewidth\color{hdegcolor}\strut\ovalbox{\textup{{$\scriptscriptstyle#1$}}}\hidewidth}}
\newcommand{\hdegdownhw}[2]{\underbrace{{\color{black}#2}\strut}_{\hidewidth\color{hdegcolor}\strut\ovalbox{\textup{{$\scriptscriptstyle#1$}}}\hidewidth}}


\begin{abstract}
Let $\mathcal{L}$ be a finite-dimensional semisimple Lie algebra of
rank $N$ over an algebraically closed field of characteristic $0$.
Associated to $\mathcal{L}$ is a family of polynomial folding maps 
$$\textsf{F}_{n}:\mathbb{A}^N\to\mathbb{A}^N\quad\text{for}\quad n\ge1$$
having the property that $\textsf{F}_{n}$ has topological degree $n^N$ and
$$\textsf{F}_{m}\circ\textsf{F}_{n}=\textsf{F}_{n}\circ\textsf{F}_{m}\quad\text{for all}\quad m,n\ge1.$$
We derive formulas for the leading terms of the
folding maps on $\mathbb{A}^2$ associated to the Lie algebras
$\mathcal{A}_2$, $\mathcal{B}_2$, and $\mathcal{G}_2$, and we use these
formulas to compute the affine automorphism group of each folding
map.
\end{abstract}

\maketitle

\setcounter{tocdepth}{1}  

\tableofcontents

\section{Introduction}
\label{section:introduction}

The subject of commuting maps has been much studied in dynamics, see
for
example~\cite{MR288104,MR966870,MR946432,MR1509242,MR3515828,MR880608,MR1160332,MR1098340,MR937529,MR2314218}.
We start with a general definition, although this paper will
concentrate on the case of maps of the affine plane.

\begin{definition}
Let~$K$ be an algebraically closed field of characteristic~$0$,
let~$X/K$ be an algebraic variety, and let
\[
f : X \longrightarrow X
\]
be a finite morphism. We say that~$f$ is
\emph{permutable}\footnote{These maps have various names in the
literature, for example they are called \emph{ integrable}
in~\cite{MR909112,MR880608,MR1160332}.} if there exists a finite
morphism~$g:X\to{X}$ such that\textup:
\begin{itemize}
\setlength{\itemsep}{0pt}
\item
  $f$ and $g$ are not invertible, i.e., not automorphisms of~$X$.
\item
  $f\circ{g}=g\circ{f}$.
\item
  $f$ and $g$ do not have a common iterate.
\end{itemize}
\end{definition}

One problem that we will study in this paper is the computation of the
group of automorphisms, also sometime called the group of
self-similarities, of permutable maps.

\begin{definition}
Let~$K$ be an algebraically closed field of characteristic~$0$,
let~$X/K$ be a variety and let
\[
f : X \longrightarrow X
\]
be a finite morphism. The \emph{group of automorphisms of~$f$} is the
group
\[
\Aut(f) = \bigl\{ \a\in\Aut(X) : \a^{-1}\circ f\circ\a = f \bigr\}.
\]
\end{definition}

\begin{example}
A classical theorem of Julia~\cite{MR1509242} and
Ritt~\cite{MR1501252,MR1500495} (see also Eremenko~\cite{MR1027462})
classifies permutable maps on~$\AA^1$, and more generally on~$\PP^1$.
In particular, a permutable map~$f:\AA^1\to\AA^1$ is conjugate to
either a power map~$x^n$ or a Chebeyshev polynomial~$T_n$, whose
automorphism groups are given by
\[
\Aut(x^n) \cong \bfmu_{n-1}
\quad\text{and}\quad
\Aut(T_n) = \begin{cases}
  \bfmu_1 &\text{if $n$ is even,}\\
  \bfmu_2 &\text{if $n$ is odd.}\\
\end{cases}
\]
\end{example}

We note that one consturction of the Chebyshev polynomial~$T_n$ is
to start with the~$n$-power map on~$\GG_m$ and take the quotient by
the automorphism~$z\to{z^{-1}}$.  There is an analogous construction
in higher dimensions, where~$\GG_m$ is replaced by a
higher-dimensional torus associated to a Lie algebra and the quotient
is via an action of an affine Weyl group. The details of the
construction will not concern us, so we give only a brief sketch (at the end of this introduction)
and provide some references.

\begin{theorem}
\label{theorem:FoldnLieL}
\cite{MR909112,MR880608} Let~$\Lie{L}$ be a finite-dimensional
semisimple Lie algebra over an algebraically closed field of
characteristic~$0$, and let~$N$ be the rank of~$\Lie{L}$, i.e., the
dimension of a Cartan subalgebra. Then for each~$n\ge0$ there is an
associated polynomial map
\[
\Fold_n[\Lie{L}] : \AA^N \longrightarrow \AA^N,
\]
called the \emph{$n$th folding
map\footnote{For~$\Lie{B}_2$,~$\Lie{G}_2$, and~$\Lie{F}_4$, there are
additional folding
maps~$\Fold_{\!n\sqrt2}[\Lie{B}_2]$,~$\Fold_{\!n\sqrt3}[\Lie{G}_2]$,
and $\Fold_{\!n\sqrt2}[\Lie{F}_4]$ that satisfy the commutativity
property $\Fold_\a\circ\Fold_\b=\Fold_{\a\b}=\Fold_\b\circ\Fold_\a$.}
associated to the Lie algebra~$\Lie{L}$}, having the following
properties\textup:
\begin{parts}
\Part{(1)}
$\Fold_n[\Lie{L}]$ has topological degree~$n^N$.
\Part{(2)}
For all~$m,n\ge0$, we have
\begin{equation}
  \label{eqn:FmFnFmnFnFm}
  \Fold_m[\Lie{L}]\circ\Fold_n[\Lie{L}]
  = \Fold_{mn}[\Lie{L}]
  = \Fold_n[\Lie{L}]\circ\Fold_m[\Lie{L}].
\end{equation}
\end{parts}
\end{theorem}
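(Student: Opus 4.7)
The plan is to realize $\Fold_n[\Lie{L}]$ as the descent of the $n$-th power map on a maximal torus modulo the action of the Weyl group. Let $G$ be a connected semisimple algebraic group with Lie algebra~$\Lie{L}$, fix a maximal torus $T\subset G$ of dimension $N$, and let $W=N_G(T)/T$ act on $T$ by conjugation. A theorem of Chevalley asserts that the ring of $W$-invariant characters
\[
\ZZ[X^*(T)]^W = \ZZ[\chi_1,\dots,\chi_N]
\]
is a polynomial ring freely generated by the characters $\chi_i$ of the fundamental representations of $G$, so the categorical quotient $T\GITQuot W$ is canonically isomorphic to $\AA^N$ with coordinates $(\chi_1,\dots,\chi_N)$.

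Next I would descend the $n$-th power endomorphism $[n]\colon T\to T$, $t\mapsto t^n$. Because $[n]$ is a group homomorphism and $W$ acts on $T$ by group automorphisms, $[n]$ is $W$-equivariant and descends to a morphism
\[
\Fold_n[\Lie{L}]\colon \AA^N \longrightarrow \AA^N.
\]
To see that $\Fold_n[\Lie{L}]$ is in fact polynomial, observe that for each fundamental character the pullback $[n]^*\chi_i=\chi_i\circ[n]$ lies in $\ZZ[X^*(T)]^W$ and is therefore, by the Chevalley theorem above, a polynomial in $\chi_1,\dots,\chi_N$, which yields an explicit polynomial formula for each coordinate of $\Fold_n[\Lie{L}]$.

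Finally, to verify the two properties: on the $N$-dimensional torus $T$ the map $[n]$ is an isogeny with kernel of order $n^N$, hence of topological degree $n^N$; since $W$ acts generically freely on $T$ and the quotient $T\to T\GITQuot W$ is \'etale on the regular locus, $\Fold_n[\Lie{L}]$ inherits the same topological degree, proving (1). For (2), the identities $[m]\circ[n]=[mn]=[n]\circ[m]$ hold tautologically on $T$ and descend directly to the asserted equality~\eqref{eqn:FmFnFmnFnFm} on the quotient. The main obstacle in this plan is the initial structural step, namely identifying $\ZZ[X^*(T)]^W$ as a polynomial ring in the fundamental characters and hence $T\GITQuot W\cong\AA^N$; this is the nontrivial Lie-theoretic input (a toroidal analogue of the Chevalley--Shephard--Todd theorem), and once it is granted, the polynomiality of $\Fold_n[\Lie{L}]$, the degree computation, and the commutativity relation all follow as formal consequences.
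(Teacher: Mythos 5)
Your proposal is correct and is essentially the paper's own argument: the paper realizes the torus analytically as $\Lie{L}_0/\Lambda$ for a Cartan subalgebra $\Lie{L}_0$ and invokes Chevalley's theorem on exponential invariants (orbit sums of the fundamental weights) in place of your fundamental characters, then descends $\bfx\mapsto n\bfx$ exactly as you descend the power map $[n]$ on $T$. The only point to watch is that you should take $G$ simply connected, so that $X^*(T)$ is the full weight lattice and the fundamental characters really do freely generate $\ZZ[X^*(T)]^W$; granting that, your degree and commutativity arguments match (indeed slightly amplify) the paper's sketch.
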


\begin{definition}
The folding map~$\Fold_n[\Lie{L}]$ described in
Theorem~\ref{theorem:FoldnLieL} is so named because it may be
constructed geometrically in terms of folding the chambers
of~$\Lie{L}$ associated to a system of roots using the reflection maps
that generate the affine Weyl group of~$\Lie{L}$.  See~\cite{MR937529}
for a nice description of this construction in dimension~$2$.
\end{definition}

\begin{remark}
We observe that~$\Fold_n[\Lie{L}]$ for~$n\ge2$ is a permutable map
on~$\AA^N$, since consideration of topological degrees shows
that~$\Fold_m[\Lie{L}]$ and~$\Fold_n[\Lie{L}]$ cannot have a common
iterate if~$\gcd(m,n)=1$. More precisely, it follows
from~\eqref{eqn:FmFnFmnFnFm} that~$\Fold_m[\Lie{L}]$
and~$\Fold_n[\Lie{L}]$ have a common iterate if and only
if~\text{$m^i=n^j$} for some~\text{$i,j\ge1$}.  Further, it is known
that if two simple Lie algebras~$\Lie{L}_1$ and~$\Lie{L}_2$ have
non-isomorphic Weyl groups, then their folding maps are inequivalent;
see~\cite[Corollary to Proposition~3]{MR880608}.
\end{remark}

It is possible to construct higher dimensional permutable maps from
lower dimensional maps.  The following definition characterizes one
way that this can be done.

\begin{definition}
Let~$f:\AA^N\to\AA^N$ be a polynomial map. We say that~$f$
is~\emph{triangular}\footnote{These maps are called \emph{reducible}
in~\cite{MR880608}.} if there is an affine automorphism\footnote{An
\emph{affine automorphism of~$\AA^N$} is an
element~$\ell\in\GL_N\rtimes\GG_a^N$, where~$\ell=(M,\bfb)$ acts
on~$\AA^N$ via matrix multiplication and
translation,~\text{$(M,\bfb)\cdot\bfv=M\bfv+\bfb$}.}
\[
\ell:\AA^N\to\AA^N
\]
so that
\[
  \ell^{-1}\circ f\circ\ell(\bfx) = \bigl[
  \underbrace{\f_1(x_1)}_{\text{just $x_1$}},\,
  \underbrace{\f_2(x_1,x_2)}_{\text{just $x_1,x_2$}},\,
    \underbrace{\f_3(x_1,x_2,x_3)}_{\text{just $x_1,x_2,x_3$}},
      \ldots \bigr].
\]
\end{definition}

A bold conjecture of Veselov\footnote{Quoting from~\cite{MR880608}:
``The following construction~\cite{MR909112} gives a series of
integrable mappings for arbitrary~$n$ which, the author supposes,
exhausts all integrable polynomial mappings.''}  suggested that every
non-triangular permutable map on affine space comes from the Lie
algebra construction described in Theorem~\ref{theorem:FoldnLieL}.
This conjecture is not entirely correct, since for example there are
non-triangular permutable maps of~$\AA^2$ in Dinh's
classification~\cite{MR1824960} that do not come from the Lie algebra
construction. But Lie algebra folding maps do form 
very interesting families of permutable maps.

In particular, the classification of simple Lie algebras implies that
the folding maps of~$\AA^2$ are associated to one of the Lie
algebras~$\Lie{A}_2$,~$\Lie{B}_2$, or~$\Lie{G}_2$.\footnote{The
classification of finite-dimensional simple Lie algebras consists of
four infinite families and a handful of exceptional cases.  However,
in dimension~$2$ there are isomorphisms
\text{$\Lie{D}_2\cong\Lie{A}_1\times\Lie{A}_1$} and
\text{$\Lie{B}_2\cong\Lie{C}_2$}, so there are only three distinct
cases.}  Withers~\cite{MR937529} gives recursion formulas for these
$2$-dimensional folding maps. In this paper we use Withers' recursions
to find the leading terms of the folding map coordinate functions,
which in turn we use to compute the automorphism groups of
the~$2$-dimensional folding maps. We state the latter result here.
For the former, see Proposition~\ref{proposition:Aleadingterms}
for~$\Lie{A}_n$, Proposition~\ref{proposition:XnYnexpansionforBn}
for~$\Lie{B}_n$, and Proposition~\ref{proposition:XnYnGnformulas}
for~$\Lie{G}_n$.

\begin{theorem}
The affine automorphism groups of the~$2$-dimensional folding maps are
as follows\textup:
\begin{align*}
  \Aut\bigl(\Fold_n[\Lie{A}_2]\bigr)
  &\cong\begin{cases}
  \Scal_3 &\text{if $n\equiv1\pmodintext3$,} \\
  \bfmu_2 &\text{if $n\not\equiv1\pmodintext3$.} \\
  \end{cases} \\[3\jot]
  \Aut\bigl(\Fold_n[\Lie{B}_2]\bigr)
  &\cong\begin{cases}
  \bfmu_2 &\text{if $n\equiv1\pmodintext1$,} \\
  1 &\text{if $n\equiv0\pmodintext0$.} \\
  \end{cases} \\[3\jot]
  \Aut\bigl(\Fold_n[\Lie{G}_2]\bigr)
  &\cong 1.
\end{align*}
\end{theorem}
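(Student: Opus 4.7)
The plan is to exploit the explicit leading-term formulas for $(X_n, Y_n) = \Fold_n[\Lie{L}]$ provided by Propositions~\ref{proposition:Aleadingterms}, \ref{proposition:XnYnexpansionforBn}, and~\ref{proposition:XnYnGnformulas}. Write an affine automorphism as $\alpha = (M, \bfb)$ with $M = \SmallMatrix{a&b\\c&d}$ and $\bfb = (e, f)^{T}$; the defining condition $\alpha^{-1} \circ \Fold_n \circ \alpha = \Fold_n$ rearranges to the polynomial identity $\Fold_n \circ \alpha = \alpha \circ \Fold_n$, and matching the coefficient of each monomial $x^{i} y^{j}$ in each coordinate yields a polynomial system in $(a, b, c, d, e, f)$. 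For each Lie algebra I would (i)~exhibit the geometric automorphisms coming from the Lie-theoretic structure, realizing the claimed lower bound on $\Aut(\Fold_n[\Lie{L}])$, and (ii)~use a top-down monomial analysis to rule out everything else.

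For $\Lie{A}_2$ the two families of geometric automorphisms are $\sigma(x, y) = (y, x)$, valid for every $n$ (since $Y_n(x, y) = X_n(y, x)$ by the outer Dynkin diagram automorphism / complex conjugation on the $\SL_3$-torus), and $\rho_{\zeta}(x, y) = (\zeta x, \zeta^{2} y)$ for $\zeta \in \bfmu_3$, valid precisely when $n \equiv 1 \pmod{3}$ (since $X_n$ and $Y_n$ are $\ZZ/3$-graded of weights $n$ and $2n$, induced by the $\bfmu_3$-center of $\SL_3$). Together they generate $\Scal_3$ when $3 \mid n-1$ and $\bfmu_2$ otherwise. To rule out other automorphisms, both $X_n$ and $Y_n$ have total degree $n$ with leading monomials $x^{n}$ and $y^{n}$; matching the degree-$n$ parts of $\Fold_n \circ \alpha = \alpha \circ \Fold_n$ gives $(ax + by)^{n} = ax^{n} + by^{n}$ and $(cx + dy)^{n} = cx^{n} + dy^{n}$, forcing $ab = cd = 0$, so $M$ is either diagonal or anti-diagonal (the anti-diagonal case being the $\sigma$-coset of the diagonal case). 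In the diagonal case, $M = \diag(a, d)$ with $a^{n-1} = d^{n-1} = 1$. Pushing to the sub-leading monomial $x^{n-3}$ in $X_n$, whose coefficient is $n \ne 0$ (coming from the normalization $z_1 z_2 z_3 = 1$ in the Newton power-sum recursion), imposes $a^{n-4} = 1$; combined with $a^{n-1} = 1$ this yields $a^{3} = 1$, so $a \in \bfmu_{\gcd(n-1,\,3)}$. Matching $x^{n-1}$-type coefficients (which vanish in $X_n$) forces $\bfb = \bfzero$.

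For $\Lie{B}_2$ and $\Lie{G}_2$ the analysis is similar in spirit but with one crucial structural difference: these Weyl groups contain the inversions $z_i \to z_i^{-1}$, so the torus scaling that produced the $\bfmu_3$ in the $\Lie{A}_2$ case does not preserve the invariant ring. For $\Lie{B}_2$ the only candidate geometric automorphism is $\tau(x, y) = (-x, y)$, coming from the $\bfmu_2$-center of the simply-connected cover; the torus identity $(-t)^{n} = (-1)^{n} t^{n}$ shows that $\tau$ commutes with $\Fold_n$ iff $n$ is odd, equivalently iff $X_n$ is an odd polynomial in $x$. For $\Lie{G}_2$ the center is trivial, so no such candidate exists. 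For the upper bound, I would first match $y^{n}$-coefficients in the first coordinate (noting that $X_n$ has $y$-degree at most $\lfloor n/2 \rfloor$, while $Y_n$ has $y^n$-coefficient $1$) to deduce $b = 0$, and then by parallel analyses conclude $c = 0$ and $\bfb = \bfzero$, so $M = \diag(a, d)$. The diagonal-case constraint reduces to requiring $a^{i} d^{j} = a$ for every monomial $x^{i} y^{j}$ of $X_n$ (and similarly for $Y_n$). The decisive input for $\Lie{B}_2$ is that when $n$ is even $X_n$ has a nonzero constant term (equal to $2 c_n(0) = \pm 4$ in the Chebyshev notation $c_n(s) = 2T_n(s/2)$), forcing $a = 1$; while when $n$ is odd $X_n$ has no constant but has a nonzero $x^{n-2}$-coefficient (equal to $-n$), which combined with $a^{n-1} = 1$ forces $a^{2} = 1$, giving the stated $\bfmu_2$. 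For $\Lie{G}_2$ the analogue uses the richer recursion of Proposition~\ref{proposition:XnYnGnformulas} to identify a sufficiently generic low-degree monomial of $X_n$ whose nonzero coefficient pins $a = 1$ for every $n \ge 2$.

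The main obstacle is verifying that the sub-leading coefficients on which the tight constraints rely are genuinely nonzero. For $\Lie{A}_2$ this is transparent via the Newton identities; for $\Lie{B}_2$ it is manageable by tracking the Chebyshev-like recursion separately in the two parity cases; but for $\Lie{G}_2$ one must identify a specific ``pinning'' monomial of $X_n$ whose coefficient is nonzero for every relevant $n$, which requires some care with the recursion of Proposition~\ref{proposition:XnYnGnformulas}. A secondary bookkeeping issue is combining multiple cyclic constraints (such as $a^{n-1} = 1$ together with $a^{3} = 1$) into the correct cyclic group via $\gcd$, as happens in the $\Lie{A}_2$ case where $\bfmu_{\gcd(n-1,\,3)}$ emerges.
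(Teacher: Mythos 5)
Your overall strategy---matching monomial coefficients in $\Fold_n\circ\f=\f\circ\Fold_n$ against the leading-term formulas of Propositions~\ref{proposition:Aleadingterms}, \ref{proposition:XnYnexpansionforBn}, and~\ref{proposition:XnYnGnformulas}---is exactly the paper's, and your $\Lie{A}_2$ argument is essentially correct: working in the $(z,\zbar)$ coordinates, your use of the $z^{n-3}$ coefficient (which equals $n$) in place of the paper's $z^{n-2}\zbar$ coefficient is a harmless variant that directly produces $\bfmu_{\gcd(n-1,3)}$, and your identification of the extra automorphisms for $n\equiv1\pmodintext3$ matches Lemma~\ref{lemma:Acirczeta}. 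The genuine gap is in the $\Lie{B}_2$ and $\Lie{G}_2$ cases, where you propose to first reduce to a translation-free diagonal map (``conclude $c=0$ and $\bfb=\bfzero$, so $M=\diag(a,d)$'') and then pin $a$ via a single nonzero coefficient of $X_n$. That reduction is circular. Writing $\f(x,y)=(ax+by+c,dx+ey+f)$ as in the paper, the constraints extracted from the \emph{first} coordinate force the remaining entries to be specific nonzero functions of $a$: for $\Lie{B}_2$ one is driven to $e=a^2$ and $f=a^2-1$, and for $\Lie{G}_2$ to $c=1-a$, $d=a-a^2$, $e=a^2$, $f=(a-1)(3a+4)$. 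These vanish only once $a^2=1$ (resp.\ $a=1$) is known, which is precisely what you are trying to prove; in the $\Lie{G}_2$ case the forced candidate is not even diagonal. Moreover, after substituting these values your named ``decisive'' coefficients of $X_n$ give no information: e.g.\ for odd $n$ in the $\Lie{B}_2$ case, the $x^{n-2}$ coefficient of $X_n(ax,a^2y+f)$ is $-na^{n-2}(1+f)=-na^{n}=-na$, which matches $aX_n$ identically for every $a$ with $a^{n-1}=1$. (A smaller issue of the same flavor: matching $y^n$ coefficients only yields $b^n=b$, not $b=0$; the paper needs the $xy^{n-1}$ and $x^2y^{n-2}$ coefficients together to kill $b$.)

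The missing idea is that the first coordinate alone does not determine $a$, and one must turn to the second coordinate $Y_n$. For $\Lie{B}_2$ the paper compares the $x^2y^{n-2}$ coefficient of $Y_n$ (namely $-n$) on both sides, obtaining $-na^2=-na^2(a^2)^{n-2}=-n$ and hence $a^2=1$, after which $f=a^2-1=0$ and the parity criterion for $(-x,y)$ finishes the argument. For $\Lie{G}_2$ the key is that $Y_n$ has degree $\lfloor 3n/2\rfloor>n$ with top term $\pm2x^{3n/2}$ or $\pm nx^{(3n-3)/2}y$; comparing these top terms gives $a^{3k-2}=1$ when $n=2k$ (so $a^{\gcd(3k-2,2k-1)}=a=1$) and forces $c=0$, hence $a=1$, when $n=2k+1$. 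Your plan of locating ``a sufficiently generic low-degree monomial of $X_n$'' cannot replace this step, since the one-parameter family above commutes with the first coordinate to at least the order $O(n-3)$ computed in Proposition~\ref{proposition:XnYnGnformulas}(a). Incidentally, this large top degree of $Y_n$ is also what you would use to get $b=0$ for $\Lie{G}_2$ (the paper compares degrees $\lfloor 3n/2\rfloor$ versus $n$), which your sketch does not address.
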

\begin{proof}
See Theorem~\ref{theorem:AutAn} for~$\Lie{A}_n$,
Theorem~\ref{theorem:AutBn} for~$\Lie{B}_n$, and
Theorem~\ref{theorem:GnAut} for~$\Lie{G}_n$.
\end{proof}

As a further application of our leading-term formulas, in
Theorem~\ref{theorem:morphfoldQ} we describe the maps on~$\PP^2$
induced by homogenizing the folding maps on~$\AA^2$.  In particular,
we prove that~$\Fold_n[\Lie{A}_2]$ and~$\Fold_n[\Lie{B}_2]$ extend to
endomorphisms on~$\PP^2$, while~$\Fold_n[\Lie{G}_2]$ extends to a
rational map of~$\PP^2$ having either one or two points of
indeterminacy, depending on the parity of~$n$.

\begin{remark}
There has been a considerable amount of work aimed at classifying
permutable maps of~$\PP^N$. We mention in particular the paper of
Dinh~\cite{MR1824960} that classifies permutable maps of~$\AA^2$ that
extend to morphisms of~$\PP^2$, the paper of Dinh and
Sibony~\cite{MR1931758} that extends this work to endomorphisms
of~$\PP^N$ whose degrees are multiplicatively independent, and the
paper of Kaufmann~\cite{MR3784253} that covers the case that the
degrees are multiplicatively dependent.
\par
In terms of the classification in~\cite{MR1824960}, the folding
map~$\Fold_n[\Lie{B}_2]$, which extends to an endomorphism of~$\PP^2$,
corresponds to a Case~(4) map with associated map on the line at
infinity being the homogenized Chebyshev polynomial~$T_n$.
Explicitly, the map~$\Fold_n[\Lie{B}_2]$ is characterized by the
functional equation
\[
\Fold_n[\Lie{B}_2](u+v,uv) = \bigl(T_n(u)+T_n(v),\,T_n(u)\cdot T_n(v)\bigr);
\]
cf.~\cite{MR946432}, remembering that~$\Lie{B}_2\cong\Lie{C}_2$.  On
the other hand, the folding map~$\Fold_n[\Lie{A}_2]$, which also
extends to an endomorphisms of~$\PP^2$, does not fit into any of the four
cases described in~\cite{MR1824960}, although presumably it is
covered by the more general treatment in~\cite{MR1931758}.  Finally,
the folding map~$\Fold_n[\Lie{G}_2]$ does not extend to an endomorphism
of~$\PP^2$, and thus it is not included in any of the current
classifications.
\end{remark}

\begin{proof}[Proof sketch of Theorem \ref{theorem:FoldnLieL}
 \textup(apr{\`e}s~\cite{MR880608}\textup)]
We take~$K=\CC$.
Let~$\Lie{L}_0\cong\CC^N$ be a Cartan subalgebra of~$\Lie{L}$, let~$\Lie{L}_0^*$
be its dual, let~$w_1,\ldots,w_N$ be a system of fundamental weights
in~$\Lie{L}_0^*$, and let~$\Lambda$ be the lattice in~$\Lie{L}_0$ that is
dual to~\text{$\Span_\ZZ(w_1,\ldots,w_N)\subset\Lie{L}_0^*$}.  Further
let~${\textsf{W}}$ be the Weyl group with its action on~$\Lie{L}_0^*$.
For each~$1\le{k}\le{N}$, define an exponential map
\[
\f_k : \Lie{L}_0/\Lambda \longrightarrow\CC,\quad
\f_k(\bfx) = \sum_{\s\in{\textsf{W}}} e^{2\pi i \s(w_k)(\bfx)},
\]
and fit these maps together to give a map
\[
\Phi_{\Lie{L}} : \Lie{L}_0/\Lambda \longrightarrow\CC^n,\quad
\Phi_{\Lie{L}} = (\f_1,\ldots,\f_N).
\]
A theorem of Chevalley says that the algebra of exponential invariants
for the action of~${\textsf{W}}$ is the polynomial ring generated
by~$\f_1,\ldots,\f_N$. Hence for every~$n\ge2$ there is a unique
polynomial map~$F_n$ characterized by
\[
\Phi_{\Lie{L}}(n\bfx) = F_n\bigl(\Phi_{\Lie{L}}(\bfx)\bigr).
\]
For futher details, see~\cite{MR909112,MR880608},
as well a~\cite{MR966870,MR946432,MR1160332,MR1098340}.
\end{proof}

\section{Folding Maps for the Lie Algebra $\Lie{A}_2$}
\label{section:foldingAn}

We consider the folding maps~$\Fold_n[\Lie{A}_2]$. To ease notation in
this section, we let
\[
z=x+iy\quad\text{and}\quad 
A_n(z) = A_n(x,y) = \Fold_n[\Lie{A}_2](x,y).
\]
Then~\cite{MR937529} says that the folding maps~$A_n$ are characterized by
the formulas
\begin{gather}
  \label{eqn:Anvalsandrecursion1}
  A_0(z)=3,\quad A_1(z)=z,\quad A_2(z)=z^2-2\zbar,\\
  \label{eqn:Anvalsandrecursion2}
  A_n(z) = zA_{n-1}(z)-\zbar A_{n-2}(z) + A_{n-3}(z).
\end{gather}

\begin{remark}
Although we will use the description of the~$A_n$ in terms of~$z$
and~$\zbar$, we note that~\eqref{eqn:Anvalsandrecursion1}
and~\eqref{eqn:Anvalsandrecursion2} may be rewritten in terms
of~$xy$-coordinates by letting
\[
(X_n,Y_n) = A_n(x,y).
\]
Then
\[
A_0(x,y) = (3,0),\quad
A_1(x,y) = (x,y),\quad
A_2(x,y) = (x^2-y^2-2x,\,2xy+2y),
\]
and
\begin{align*}
  X_{n} &= x (X_{n-1}-X_{n-2}) - y (Y_{n-1}+Y_{n-2}) + X_{n-3}, \\
  Y_{n} &= x (Y_{n-1}-Y_{n-2}) + y (X_{n-1}+X_{n-2}) + Y_{n-3}.
\end{align*}
\end{remark}

\begin{remark}
It is clear from the recursion for~$A_n$ that~$A_n\in\ZZ[z,\zbar]$, so
in particular the effect of complex conjugation is
\[
\overline{A_n(z)} = A_n(\zbar).
\]
Or, if one wants to be more formal and write~$A_n(z,\zbar)$, then
\[
\overline{A_n(z,\zbar)} = A_n(\zbar,z).
\]
\end{remark}

\begin{definition}
\label{definition:bigOnotation}
We write~$O(d)$ for a polynomial in~$K[z,\zbar]$ or in~$K[x,y]$ whose
total degree is at most~$d$, i.e., writing~$f=g+O(d)$ means that~$f-g$
has the form
\[
\sum_{i+j\le d} a_{ij}z^i\zbar^j,
\quad\text{or equivalently,}\quad
\sum_{i+j\le d} a_{ij}x^iy^j.
\]
\end{definition}

\begin{lemma}
\label{lemma:Acirczeta}
\begin{parts}  
\Part{(a)}
Let~$\z\in\CC$ satisfy~$\z^3=1$. Then
\begin{equation}
  \label{eqn:Acirczeta}
  A_n(\z z) = \z^n A_n(\z).
\end{equation}
\Part{(b)}
The polynomial~$A_n$ has the form
\[
A_n(z) = \sum_{k=0}^n \sum_{\substack{i+j=k\\ i-j\equiv n\pmodintext{3}\\}} a_{ij}z^i\,\zbar^{k-i}.
\]
\end{parts}
\end{lemma}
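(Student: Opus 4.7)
The plan is to prove (a) by induction on $n$ directly from the three-term recursion \eqref{eqn:Anvalsandrecursion2}, and then to deduce (b) as a consequence of (a) together with an elementary degree estimate.

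For part (a), I treat $n=0,1,2$ as base cases using \eqref{eqn:Anvalsandrecursion1}: $A_0(\z z)=3=\z^0 A_0(z)$, $A_1(\z z)=\z z=\z A_1(z)$, and, since $\overline{\z z}=\bar\z\,\bar z=\z^{-1}\bar z=\z^2\bar z$, one gets $A_2(\z z)=\z^2 z^2-2\z^2\bar z=\z^2 A_2(z)$. For the inductive step, assume the identity holds for $A_{n-1}, A_{n-2}, A_{n-3}$. Applying the recursion \eqref{eqn:Anvalsandrecursion2} at $\z z$ and using $\bar\z=\z^{-1}=\z^2$ together with $\z^3=1$, every term on the right-hand side picks up the same factor $\z^n$:
\begin{align*}
A_n(\z z) &= \z z\cdot\z^{n-1}A_{n-1}(z)-\z^2\bar z\cdot\z^{n-2}A_{n-2}(z)+\z^{n-3}A_{n-3}(z) \\
&= \z^n\bigl(zA_{n-1}(z)-\bar z A_{n-2}(z)+A_{n-3}(z)\bigr) = \z^n A_n(z),
\end{align*}
which completes the induction.

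For part (b), I first verify by a parallel induction on the recursion that $A_n$ has total degree at most $n$ in $z,\bar z$: the base cases $\deg A_0=0$, $\deg A_1=1$, $\deg A_2=2$ hold, and each of $zA_{n-1}$, $\bar z A_{n-2}$, $A_{n-3}$ has total degree at most $n$. Hence I may write
\[
A_n(z) = \sum_{i+j\le n} a_{ij}\,z^i\bar z^{\,j}.
\]
Now apply part (a) with a primitive cube root of unity $\z$. Because $\bar z\mapsto\bar\z\,\bar z=\z^{-1}\bar z$ under $z\mapsto \z z$, the monomial $z^i\bar z^{\,j}$ is multiplied by $\z^{i-j}$, so the identity $A_n(\z z)=\z^n A_n(z)$ yields
\[
\sum_{i+j\le n}\bigl(\z^{i-j}-\z^n\bigr)a_{ij}\,z^i\bar z^{\,j}=0.
\]
Since $z$ and $\bar z$ are algebraically independent over $\CC$, each coefficient vanishes, forcing $a_{ij}=0$ whenever $i-j\not\equiv n\pmod 3$. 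Collecting the surviving terms by total degree $k=i+j$ gives exactly the claimed expansion.

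The only real obstacle is the bookkeeping with $\bar\z=\z^2$ in the inductive step of (a); beyond that, both parts are routine once one trusts the recursion.
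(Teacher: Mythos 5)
Your proof is correct and follows essentially the same route as the paper: part (a) by induction on the three-term recursion using $\bar\z=\z^{-1}=\z^2$, and part (b) by expanding $A_n$ in monomials $z^i\bar z^{\,j}$ and using (a) with a primitive cube root of unity to kill the coefficients with $i-j\not\equiv n\pmod 3$. The only difference is that you explicitly verify the degree bound $\deg A_n\le n$ by a parallel induction, which the paper leaves implicit; that is a harmless (indeed slightly more careful) addition.
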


\begin{proof}
(a)\enspace
The assumption that~$\z$ is a cube root of unity implies in particular
that~$|\z|=1$, so~$\overline\z=\z^{-1}$.  We prove
Lemma~\ref{lemma:Acirczeta} by induction on~$n$. The
equality~\eqref{eqn:Acirczeta} is clearly true for~$A_0(z)=3$
and~$A_1(z)=z$.  For~$A_2(z)=z^2-2\zbar$, we compute
\[
A_2(\z z) = \z^2 z^2 - 2 \overline{\z z}
= \z^2 z^2 - 2 \z^{-1} \zbar = \z^2A_2(z).
\]
Now assume that~\eqref{eqn:Acirczeta} is true up to~$A_{n-1}$. Then
\begin{align*}
  A_n(\z z)
  &= \z zA_{n-1}(\z z)-\overline{\z z} A_{n-2}(\z z) + A_{n-3}(\z z) \\
  &= \z^n zA_{n-1}(z)-\overline{\z}\,\z^{n-2}\zbar A_{n-2}( z) + \z^{n-3} A_{n-3}( z) \\
  &\omit\hfill\text{by the induction hypothesis,}\\
  &= \z^n \bigl(  zA_{n-1}( z)-\overline{ z} A_{n-2}( z) + A_{n-3}( z) \bigr)\\
  &\omit\hfill\text{since $\overline\z=\z^{-1}=\z^2$ for $\z\in\bfmu_3$,} \\
  &= \z^n A_n(z).
\end{align*}
\par\noindent(b)\enspace
We initially write~$A_n(z)$ as
\[
A_n(z) = \sum_{k=0}^n \sum_{i+j=k} a_{ij}z^i\zbar^j
\]
and use~(a) to show that certain coefficients vanish.  Let~$\z$ be a
primitive cube root of unity. Then~(a) implies that
\[
\z^n \sum_{k=0}^n \sum_{i+j=k} a_{ij}z^i\zbar^j
=
\sum_{k=0}^n \sum_{i+j=k} a_{ij}(\z z)^i(\overline{\z z})^j
=
\sum_{k=0}^n \sum_{i+j=k} \z^{i-j} a_{ij}z^i\zbar^j.
\]
Hence
\[
a_{ij}\ne0 \quad\Longrightarrow\quad \z^n=\z^{i-j}
\quad\Longrightarrow\quad
i-j\equiv n\pmodintext3.
\qedhere
\]
\end{proof}

\begin{proposition}
\label{proposition:Aleadingterms}
For all~$n\ge2$, the polynomial~$A_n\in\ZZ[z,\barz]$ satisfies
\begin{equation}
\label{eqn:Anvalsandrecursion3}
A_n(z) = z^n - n z^{n-2} \zbar + \frac{n^2-3n}{2} z^{n-4} \zbar^2 + O(n-3).
\end{equation}
\end{proposition}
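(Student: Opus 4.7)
The plan is to prove the expansion by induction on $n$, using the three-term recursion \eqref{eqn:Anvalsandrecursion2} as the engine. I would handle the base cases $n = 2, 3, 4$ directly: $A_2 = z^2 - 2\bar{z}$ is given; computing $A_3 = z A_2 - \bar{z} A_1 + A_0 = z^3 - 3z\bar{z} + 3$ matches the formula because $\tfrac{n^2-3n}{2}$ vanishes at $n=3$; and one further application of the recursion gives $A_4 = z^4 - 4z^2\bar{z} + 2\bar{z}^2 + 4z$, matching the formula with third coefficient $2$.

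For the inductive step at $n \geq 5$, I would substitute the formula for $A_{n-1}$ and $A_{n-2}$ into the recursion and note that $A_{n-3}$ has total degree at most $n-3$ and is absorbed wholesale into $O(n-3)$. Multiplying the leading three terms through by $z$ and $-\bar{z}$ respectively gives
\begin{align*}
z A_{n-1} &= z^n - (n-1)z^{n-2}\bar{z} + \tfrac{(n-1)(n-4)}{2} z^{n-4}\bar{z}^2 + O(n-3),\\
-\bar{z} A_{n-2} &= -z^{n-2}\bar{z} + (n-2) z^{n-4}\bar{z}^2 + O(n-3),\\
A_{n-3} &= O(n-3),
\end{align*}
where the contribution $-\tfrac{(n-2)(n-5)}{2} z^{n-6}\bar{z}^3$ produced by $-\bar{z}$ acting on the third leading term of $A_{n-2}$ has total degree exactly $n-3$ and is therefore absorbed into the remainder. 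Summing the three lines yields $A_n = z^n - n z^{n-2}\bar{z} + c_n z^{n-4}\bar{z}^2 + O(n-3)$ with
\[
c_n = \tfrac{(n-1)(n-4)}{2} + (n-2) = \tfrac{(n-1)(n-4) + 2(n-2)}{2} = \tfrac{n^2 - 3n}{2},
\]
which closes the induction.

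There is no serious obstacle here; the proof is a mechanical expansion driven by the recursion. The two points needing care are (i) bookkeeping the degrees of the remainders after multiplication by $z$ or $\bar{z}$, so that each error term indeed still lies in $O(n-3)$, and (ii) interpreting the formula for the very small case $n = 2$, where the formal third leading term $z^{-2}\bar{z}^2$ is not a polynomial and must be read as absent --- equivalently, one can prove the formula as a polynomial identity only for $n \geq 4$ and verify $n = 2, 3$ by direct inspection. A slicker alternative would be to observe that the characteristic polynomial of the recursion is $t^3 - z t^2 + \bar{z} t - 1$, so $A_n = \a^n + \b^n + \g^n$ for its three roots, and then expand the dominant root as a power series in $z^{-1}$; but the direct induction is shorter and gives exact coefficients at once.
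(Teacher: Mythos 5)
Your proof is correct and follows essentially the same route as the paper's: direct verification of small cases followed by induction on $n$ via the recursion \eqref{eqn:Anvalsandrecursion2}, with $A_{n-3}$ and the $\zbar^3$ term absorbed into $O(n-3)$, and the coefficient arithmetic $\frac{(n-1)(n-4)}{2}+(n-2)=\frac{n^2-3n}{2}$ matching the paper's. Your remark about the degenerate reading of the formula at $n=2$ (and the paper's analogous footnote at $n=3$) is a fair and careful observation, not a defect.
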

\begin{proof}
Using~\eqref{eqn:Anvalsandrecursion1} and~\eqref{eqn:Anvalsandrecursion2},
we compute
\begin{align*}
A_3(z) &= z^3 - 3 z \zbar + 3, \\
A_4(z) &= z^4 - 4 z^2 \zbar + 4 z + 2 \zbar^2, \\
A_5(z) &= z^5 - 5 z^3 \zbar+ 5 z^2 + 5 z \zbar^2 - 5 \zbar.
\end{align*}
Hence~\eqref{eqn:Anvalsandrecursion3} is true
for~$n=3,4,5$.\footnote{For~$n=3$, the
formula~\eqref{eqn:Anvalsandrecursion3} appears to have a term of the
form~$z^{-1}\zbar^2$, but the coefficient~$\frac{n^2-3n}{2}$ vanishes,
so that term does not appear in the formula.}  Assume
that~\eqref{eqn:Anvalsandrecursion3} is true up to~$n-1$ for
some~\text{$n\ge5$}, so our induction assumption says that
\begin{equation}
  \label{eqn:Anvalsandrecursion4}
A_j(z) = z^j - j z^{j-2} \zbar + \frac{j^2-3j}{2} z^{j-4} \zbar^2 + O(j-3)
\quad\text{for all $j<n$.}
\end{equation}
Then
\begin{align*}
 A_n
&=  zA_{n-1}-\zbar A_{n-2} + A_{n-3} 
\quad\text{from \eqref{eqn:Anvalsandrecursion2},} \\
&=  z\left( z^{n-1} - (n-1) z^{n-3} \zbar + \frac{n^2-5n+4}{2} z^{n-5} \zbar^2 + O(n-4) \right) \\
&\qquad{} - \zbar\left( z^{n-2} - (n-2) z^{n-4} \zbar + \frac{n^2-7n+10}{2} z^{n-6} \zbar^2 + O(n-5) \right)\\
&\qquad{} + \left( z^{n-3} - (n-3) z^{n-5} \zbar + \frac{n^2-9n+18}{2} z^{n-7} \zbar^2
+ O(n-6) \right) \\
&\omit\hfill using the induction hypothesis \eqref{eqn:Anvalsandrecursion4}, \\
&=  \left( z^{n} - (n-1) z^{n-2} \zbar + \frac{n^2-5n+4}{2} z^{n-4} \zbar^2 + O(n-3) \right) \\
&\qquad{} - \Bigl( z^{n-2}\zbar - (n-2) z^{n-4} \zbar^2 + O(n-3) \Bigr) + O(n-3) \\
&= z^n - nz^{n-2}\zbar + \frac{n^2-3n}{2}z^{n-4}\zbar^2+O(n-3).
\qedhere
\end{align*}
\end{proof}

We  use Proposition~\ref{proposition:Aleadingterms} to compute the
affine automorphism group of~$A_n$.

\begin{theorem}
\label{theorem:AutAn}
Let~$n\ge2$. Then 
\[
\Aut(A_n) \cong \begin{cases}
  \Scal_3  &\text{if $n\equiv1\pmodintext3$,} \\
  \bfmu_2  &\text{if $n\not\equiv1\pmodintext3$.} \\
\end{cases}
\]
Explicitly, if we let~$\z\in\bfmu_3$ be a primitive cube root of
unity, then the maps in~$\Aut(A_n)$ are
\[
\Aut(A_n) \cong \begin{cases}
  \{z,\,\z z,\,\z^2 z,\,\zbar,\,\z\zbar,\,\z^2\zbar\}
  &\text{if $n\equiv1\pmodintext3$,} \\
  \{z,\,\zbar\}
  &\text{if $n\not\equiv1\pmodintext3$.} \\
\end{cases}
\]
\end{theorem}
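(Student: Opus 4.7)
The plan is to recast $A_n$ as an algebraic endomorphism of $\AA^2$ over $\CC$, then exploit Proposition~\ref{proposition:Aleadingterms} to constrain the leading coefficients of any commuting affine automorphism. Treating $z = x+iy$ and $w = x-iy$ as independent $\CC$-linear coordinates on $\AA^2$ (a $\CC$-linear change of basis from $(x,y)$), the folding map becomes the polynomial endomorphism
\[
\tilde A_n(z, w) = \bigl(A_n(z, w),\, A_n(w, z)\bigr),
\]
where the second coordinate comes from the identity $\overline{A_n(z,\bar z)} = A_n(\bar z, z)$. In these coordinates, the listed maps are $\CC$-linear: $z\mapsto \zeta^k z$ corresponds to $(z,w)\mapsto (\zeta^k z,\,\zeta^{-k}w)$ and $z\mapsto \zeta^k\bar z$ to $(z,w)\mapsto (\zeta^k w,\,\zeta^{-k}z)$. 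The algebraic form of Lemma~\ref{lemma:Acirczeta}(a) asserts $A_n(\zeta u,\,\zeta^{-1}v) = \zeta^n A_n(u,v)$ as an identity in $\CC[u,v]$ for each $\zeta\in\bfmu_3$, and a direct computation shows that each such linear map commutes with $\tilde A_n$ if and only if $\zeta^{k(n-1)}=1$. This gives the six maps when $n\equiv 1\pmodintext3$ and only $\{z,\bar z\}$ otherwise.

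For the converse, I would consider a general affine automorphism $\alpha(z,w) = (az+bw+c,\,dz+ew+f)$ with $\alpha\circ\tilde A_n = \tilde A_n\circ\alpha$. The two coordinates of this relation give
\begin{align*}
A_n(az+bw+c,\,dz+ew+f) &= a\,A_n(z,w) + b\,A_n(w,z) + c, \\
A_n(dz+ew+f,\,az+bw+c) &= d\,A_n(z,w) + e\,A_n(w,z) + f,
\end{align*}
and I would extract constraints on the coefficients by matching homogeneous components of successively decreasing total degree, using the expansion
\[
A_n(U,V) = U^n - n\,U^{n-2}V + \tfrac{n^2-3n}{2}\,U^{n-4}V^2 + O(n-3)
\]
from Proposition~\ref{proposition:Aleadingterms}. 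At total degree $n$, both equations reduce to $(\lambda z+\mu w)^n = \lambda z^n+\mu w^n$ for $(\lambda,\mu)\in\{(a,b),(d,e)\}$. In characteristic zero the middle binomial coefficients are nonzero, forcing $\lambda\mu=0$. Combined with the invertibility condition $ae-bd\ne 0$, this leaves exactly two cases: $b=d=0$ (``diagonal'') with $a^{n-1}=e^{n-1}=1$, or $a=e=0$ (``swap'') with $b^{n-1}=d^{n-1}=1$.

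At total degree $n-1$ the subleading term $-n\,U^{n-2}V$ contributes. In the diagonal case, matching the coefficient of $z^{n-1}$ in the first equation gives $na^{n-1}c=0$, hence $c=0$, while matching the coefficient of $z^{n-2}w$ yields $a^{n-3}e=1$; together with $a^{n-1}=1$ this forces $e=a^2$. The second equation analogously produces $f=0$ and $a=e^2$, so $a=(a^2)^2=a^4$ and $a^3=1$. The swap case runs in parallel and gives $c=f=0$, $d=b^2$, $b=d^2$, and $b^3=1$.

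Combining $a^{n-1}=1$ with $a^3=1$ forces $a\in\bfmu_{\gcd(3,n-1)}$, equal to $\bfmu_3$ when $n\equiv 1\pmodintext3$ and trivial otherwise; analogously for $b$. This produces exactly the maps listed in the theorem, with no others. The group structure then follows by inspection: the diagonal subgroup is cyclic of order $\gcd(3,n-1)$, the swap $z\mapsto\bar z$ has order $2$, and conjugation by the swap inverts the diagonal generator, yielding $\bfmu_3\rtimes\bfmu_2\cong\Scal_3$ when $n\equiv 1\pmodintext3$ and $\bfmu_2$ otherwise. The main technical obstacle is the careful bookkeeping in the degree-$(n-1)$ comparison, where the translation parameters $c,f$ must be tracked simultaneously with the off-diagonal coefficients; conveniently, the degree-$(n-2)$ term of Proposition~\ref{proposition:Aleadingterms} plays no role in the argument.
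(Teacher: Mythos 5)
Your proof is correct, and it reaches the same two key inputs as the paper — Proposition~\ref{proposition:Aleadingterms} for the necessary conditions and Lemma~\ref{lemma:Acirczeta}(a) for sufficiency — but your setup is genuinely different and in fact more complete. The paper identifies $\Aut(\AA^2)$ with the three-parameter family $\f_{\a,\b,\g}(z)=\a z+\b\zbar+\g$, which silently assumes that the second coordinate of the automorphism is the coefficient-conjugate of the first; it then closes the argument by invoking $\a^{n-1}=1\Rightarrow|\a|=1\Rightarrow\overline\a=\a^{-1}$, a step that only makes sense over $\CC$ with its complex conjugation. By treating $z$ and $w=\zbar$ as independent coordinates and running the degree-$n$ and degree-$(n-1)$ comparisons for the full six-parameter map $(z,w)\mapsto(az+bw+c,\,dz+ew+f)$, you (i) account for all affine automorphisms of $\AA^2$ over an algebraically closed field, not just the ``real'' ones, and (ii) replace the archimedean step by the purely algebraic relations $e=a^2$ and $a=e^2$, whence $a^3=1$. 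What the paper's route buys is brevity and a concrete geometric picture ($\RR$-linear symmetries of the triangle $\{1,\z,\z^2\}$); what yours buys is that the computation is uniform in the ground field and that the dichotomy diagonal/swap falls out of invertibility together with $ab=de=0$ rather than being imposed at the outset. Your endgame (sufficiency via $A_n(\z u,\z^{-1}v)=\z^nA_n(u,v)$ and the semidirect-product identification $\bfmu_3\rtimes\bfmu_2\cong\Scal_3$) matches the paper's, and your observation that the degree-$(n-2)$ coefficient of Proposition~\ref{proposition:Aleadingterms} is not needed is accurate — the paper does not use it here either.
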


\begin{proof} 
Writing everything in terms of~$z=x+iy$ and~$\zbar=x-iy$, the elements
of~$\Aut(\AA^2)$ are identified with maps of the form
\[
\f_{\a,\b,\g}(z) = \a z + \b \zbar + \g
\quad\text{for}\quad \a,\b,\g\in\CC.
\]
(We note that~$\f_{\a,\b,\g}(z)$ is invertible if and only if~$|\a|^2\ne|\b|^2$.)
\par
Suppose that~$\f_{\a,\b,\g}\in\Aut(A_n)$ for some~$n\ge2$. Then
\begin{align*}
  \f_{\a,\b,\g} \circ A_n(z) &= A_n \circ \f_{\a,\b,\g}(z)
  \quad\text{by definition of $\Aut(A_n)$,} \\
  \a A_n(z) + \b \overline{A_n(z)} + \g
  &= A_n(\a z + \b \zbar + \g)
  \quad\text{by definition of $\f_{\a,\b,\g}$,}  \\
  \a \bigl(z^n + O(n-1)\bigr) & + \b \bigl(\zbar^n +O(n-1)\bigr) + \g \\
  &= (\a z + \b \zbar + \g)^n + O(n-1) \quad\text{Proposition~\ref{proposition:Aleadingterms},} \\
  \a z^n + \b \zbar^n
  &= (\a z+\b\zbar)^n + O(n-1)
\end{align*}
Looking at the coefficients of~$z^n$ and~$\zbar^n$ gives
\[
\a^n=\a\quad\text{and}\quad \b^n=\b,
\]
and then the lack of any other monomials on the
left-hand side forces the equality~\text{$\a\b=0$}. (This is where we use~$n\ge2$
and~$\characteristic(K)=0$.)
\par
We consider first the case that~$\a^n=\a\ne0$ and~$\b=0$,
so in particular we see that
\begin{equation}
  \label{eqn:an1eq1abareqaneg1}
  \a^{n-1}=1
  \quad\Longrightarrow\quad
  |\a|=1
  \quad\Longrightarrow\quad
  \overline\a = \a^{-1}.
\end{equation}
Using~$\f_{\a,0,\g}(z)=\a{z}+\g$, we compute
\begin{align*}
  \f_{\a,0,\g} \circ A_n(z) &= A_n \circ \f_{\a,0,\g}(z)
  \quad\text{by definition of $\Aut(A_n)$,} \\*
  \a A_n(z) + \g
  &= A_n(\a z + \g)
  \quad\text{by definition of $\f_{\a,0,\g}$,}  \\*
  \a \bigl(z^n-nz^{n-2}\zbar & + O(n-2)\bigr) + \g \\*
  &= (\a z + \g)^n - n (\a z + \g)^{n-2}(\overline\a\,\zbar + \overline\g) + O(n-2) \\*
  &\omit\hfill\text{from Proposition~\ref{proposition:Aleadingterms},} \\*[-2\jot]
  \a z^n  - \a n z^{n-2}\zbar &+ O(n-2) \\*
  &= \a^n z^n + n \a^{n-1}\g z^{n-1} - n \overline\a\,\a^{n-2} z^{n-2} \zbar + O(n-2).  
\end{align*}
This gives
\begin{equation}
  \label{eqn:nan1gzn10}
  n \a^{n-1}\g z^{n-1} = 0
  \quad\text{and}\quad
  -\a n z^{n-2}\zbar =  - n \overline\a\,\a^{n-2} z^{n-2} \zbar .
\end{equation}
The first equality tells us that~$\g=0$, and the second equality yields
\begin{align*}
\a  &=  \overline\a\cdot\a^{n-2} \quad\text{from \eqref{eqn:nan1gzn10},} \\
&= \a^{-1} \cdot \a^{-1}
\quad\text{from \eqref{eqn:an1eq1abareqaneg1} and using $\a^n=\a$.} 
\end{align*}
Hence~$\a^3=1$. This concludes the proof that
\begin{multline*}
\f_{\a,\b,\g}\in\Aut(A_n)\quad\text{and}\quad\a\ne0 \\
\;\Longrightarrow\;
\f_{\a,\b,\g}(z)=\f_{\a,0,0}(z)=\a z\quad\text{and}\quad\a^3=1.
\end{multline*}
\par
We leave to the reader the similar calculation for~$\a=0$ and~$\b^n=\b\ne0$,
which leads to the similar conclusion
\begin{multline*}
\f_{\a,\b,\g}\in\Aut(A_n)\quad\text{and}\quad\b\ne0 \\
\;\Longrightarrow\;
\f_{\a,\b,\g}(z)=\f_{0,\b,0}(z)=\b \zbar\quad\text{and}\quad\b^3=1.
\end{multline*}
\par
Let~$\z\in\bfmu_3$ be a cube root of unity.  It remains to check under
what circumstances the maps
\[
\f_{\z,0,0}(z)=\z z
\quad\text{and/or}\quad
\f_{0,\z,0}(z)=\z\,\zbar
\]
are in~$\Aut(A_n)$. 
Lemma~\ref{lemma:Acirczeta}(a) tells us that
\begin{equation}
  \label{eqn:Ancircphizetaz}
  A_n\circ\f_{\z,0,0}(z) = A_n(\z z) = \z^n A_n(z) = \z^{n-1}\cdot\f_{\z,0,0}\circ A_n(z).
\end{equation}
Hence for~$\z\in\bfmu_3$, we have
\[
\f_{\z,0,0}\in\Aut(A_n)
\quad\Longleftrightarrow\quad
\text{$\z=1$ or $n\equiv1\pmodintext3$.}
\]
We next observe that
\begin{align*}
A_n & \circ\f_{0,\z,0}(z) \\
&= A_n(\z\,\zbar) \quad\text{by definition of $\f_{0,\z,0}(z)=\z\,\zbar$,} \\
&= \z^n A_n(\zbar)
\quad\parbox[t]{.7\hsize}{since the formula in Lemma~\ref{lemma:Acirczeta}
is a formal identity in the variable~$z=x+iy$,
so it remains true if we replace~$z$ with~$\zbar=x-iy$, } \\
&= \z^{n-1} \cdot \z \cdot \overline{A_n(z)}
\quad\text{since~$A_n(z)\in\RR[z,\zbar]$, so $\overline{A_n(z)}=A_n(\zbar)$,} \\
&= \z^{n-1}\cdot\f_{0,\z,0}\circ A_n(z)
\quad\text{by definition of $\f_{0,\z,0}(z)=\z\,\zbar$.} 
\end{align*}
So for~$\z\in\bfmu_3$, we have
\[
\f_{0,\z,0}\in\Aut(A_n)
\quad\Longleftrightarrow\quad
\text{$\z=1$ or $n\equiv1\pmodintext3$.}
\]
This completes the proof that
\[
\Aut(A_n) =
\begin{cases}
  \{ \f_{\z,0,0} : \z\in\bfmu_3 \} \cup \{ \f_{0,\z,0} : \z\in\bfmu_3 \}
  &\text{if~\text{$n\equiv1\pmodintext3$},} \\
  \{ \f_{1,0,0},\,\f_{0,1,0} \}
  &\text{if~\text{$n\not\equiv1\pmodintext3$}.} \\
\end{cases}
\]
In in the case that~$n\equiv1\pmodintext3$, we may
identify~$\Aut(A_n)$ with~$\Scal_3$ by noting that the listed set of
maps is the group of $\RR$-linear transformations of~$\CC$ preserving
the equilateral triangle with vertices~$\{1,\z,\z^2\}$.
We also note that the map~$\f_{0,1,0}(z)=\zbar$ is simply the map~$(x,y)\to(x,-y)$,
so  in the case that~$n\not\equiv1\pmodintext3$, the group~$\Aut(A_n)$
is naturally identified with~$\bfmu_2$.
\end{proof}

\section{Folding Maps for the Lie Algebra $\Lie{B}_2$}
\label{section:foldingBn}
We consider the folding maps~$\Fold_n[\Lie{B}_2]$.
To ease notation in this section, we let
\begin{equation}
  \label{eqn:XnYnBninZxy}
  (X_n,Y_n) = B_n(x,y) = \Fold_n[\Lie{B}_2](x,y),
  \quad\text{so $X_n,Y_n\in\ZZ[x,y]$.}
\end{equation}
According to~\cite{MR937529}, the first few~$B_n$ are
\begin{align*}
  B_0(x, y) &= (4,4), \\
  B_1(x, y) &= (x, y),\\ 
  B_2(x, y) &= (x^2 -2y - 4, y^2 - 2x^2 + 4y + 4),\\
  B_3(x, y) &= (x^3 - 3xy - 3x, y^3- 3x^2y + 6y^2 + 9y),
\end{align*}
and then writing $B_n(x,y)=(X_n,Y_n)$, the subsequent~$B_n$ are given by
the recursion
\begin{align}
  \label{eqn:XnrecursionB2}
  X_{n+4} &= x(X_{n+3}+X_{n+1})-(2+y)X_{n+2}-X_n, \\
  \label{eqn:YnrecursionB2}
  Y_{n+4} &= y(Y_{n+3}+Y_{n+1})-(x^2-2y-2)Y_{n+2}-Y_n.
\end{align}

\begin{proposition}
\label{proposition:XnYnexpansionforBn}
With notation~$(X_n,Y_n)=B_n(x,y)$ as in~\eqref{eqn:XnYnBninZxy} and
with big-$O$ notation as described in~\eqref{definition:bigOnotation},
we have
\begin{align*}
  X_n & = x^n - nx^{n-2}y - nx^{n-2} + \frac{n(n-3)}{2} x^{n-4}y^2 + O(n-3) \\*
  Y_n &= y^n - nx^2y^{n-2} + x^3\cdot O(n-3) + O(n-1).
\end{align*}
\end{proposition}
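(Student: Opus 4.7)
The plan is to prove the two asymptotic formulas separately, each by strong induction on $n$, following the template of the $\Lie{A}_2$ argument of Proposition~\ref{proposition:Aleadingterms}. Since the recursions \eqref{eqn:XnrecursionB2} and \eqref{eqn:YnrecursionB2} are four-term, four consecutive base cases are required, and I would establish them for $n \in \{4, 5, 6, 7\}$ by direct computation starting from $B_0, B_1, B_2, B_3$. The inductive step then consists of substituting the claimed form of $X_k$ (respectively $Y_k$) for $k \in \{n, n+1, n+2, n+3\}$ into the relevant recursion and matching the coefficients of each top-degree monomial in $X_{n+4}$ (respectively $Y_{n+4}$).

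For $X_{n+4} = x(X_{n+3} + X_{n+1}) - (2+y) X_{n+2} - X_n$, I would collect contributions to each of the four named monomials $x^{n+4}, x^{n+2} y, x^{n+2}, x^n y^2$ and verify that their coefficients equal $1, -(n+4), -(n+4), \tfrac{(n+4)(n+1)}{2}$ respectively. For instance, the coefficient of $x^n y^2$ picks up $\tfrac{(n+3)n}{2}$ from $x \cdot \tfrac{(n+3)n}{2} x^{n-1} y^2$ inside $x X_{n+3}$ and $(n+2)$ from $-y \cdot (-(n+2) x^n y)$ inside $-y X_{n+2}$, summing to $\tfrac{(n+4)(n+1)}{2}$. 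I would also verify that every other monomial of degree $\ge n+2$ in $X_{n+4}$ has zero coefficient; this follows because the inductive hypothesis implicitly asserts that $X_k$ has no unnamed monomials at degrees $k, k-1, k-2$, so each potential contribution is $0$.

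For $Y_{n+4} = y(Y_{n+3}+Y_{n+1}) - (x^2-2y-2) Y_{n+2} - Y_n$, the inductive hypothesis pins down three specific degree-$k$ coefficients in $Y_k$: the coefficients of $y^k, xy^{k-1}, x^2 y^{k-2}$ equal $1, 0, -k$ respectively, while the higher $x^i y^{k-i}$ for $i \ge 3$ are unconstrained (absorbed in $x^3 \cdot O(k-3)$). I would then check the three analogous facts for $Y_{n+4}$: the coefficient of $y^{n+4}$ is $1$ (from $y \cdot y^{n+3}$); the coefficient of $xy^{n+3}$ is $0$ (using the inductive vanishing of $xy^{n+2}$ in $Y_{n+3}$, and the fact that no other input contributes an $xy^{n+3}$ monomial); and the coefficient of $x^2 y^{n+2}$ equals $-(n+3) + (-1) = -(n+4)$, combining $-(n+3)$ from $y \cdot (-(n+3) x^2 y^{n+1})$ in $y Y_{n+3}$ with $-1$ from $-x^2 \cdot y^{n+2}$ in $-x^2 Y_{n+2}$. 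All remaining degree-$(n+4)$ monomials $x^i y^{n+4-i}$ with $i \ge 3$ lie automatically in $x^3 \cdot O(n+1)$, and lower-degree contributions fall into $O(n+3)$; the crucial absorption identity is $-x^2 \cdot (-(n+2) x^2 y^n) = (n+2) x^3 \cdot (x y^n) \in x^3 \cdot O(n+1)$, which shows that the $x^2$-multiplication in the recursion promotes the $x^2 y^n$ term into the $x^3$-divisible class rather than spoiling the claimed form.

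The main obstacle is the bookkeeping, especially for $Y_n$. The error term $x^3 \cdot O(n-3) + O(n-1)$ is mixed and asymmetric, and one must track carefully which products of inputs promote $O$-type terms into the $x^3$-divisible class. In particular, the inductive statement must include the vanishing of the $xy^{k-1}$ coefficient as an explicit structural constraint, since otherwise the $y Y_{n+3}$ contribution to the $xy^{n+3}$ coefficient in $Y_{n+4}$ could not be controlled. For $X_n$ the analogous bookkeeping is conceptually simpler because the error term is a plain $O(n-3)$, but it still requires verifying that several spurious monomial coefficients at each of the three named top degrees continue to vanish after one step of the recursion.
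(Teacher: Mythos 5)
Your proposal is correct and follows essentially the same route as the paper: strong induction on the four-term recursions \eqref{eqn:XnrecursionB2} and \eqref{eqn:YnrecursionB2}, matching the named top-degree coefficients and absorbing everything else into the error terms, with the same key observations (e.g.\ the coefficient of $x^ny^2$ in $X_{n+4}$ being $\tfrac{(n+3)n}{2}+(n+2)=\tfrac{(n+4)(n+1)}{2}$, and the $x^2$-multiplication promoting $x^2y^n$ terms into the $x^3$-divisible class for $Y_{n+4}$). The only cosmetic difference is your choice of base cases $n\in\{4,5,6,7\}$ versus the paper's use of the explicit formulas for $B_0,\ldots,B_3$.
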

\begin{proof}
The stated formulas are true for~$n\ge3$ from the explicit formulas for~$B_n$ in these
cases. So we assume that they are true up to~$n+3$ and prove that they
are true for~$n+4$ by induction.
\par
We use the recursion~\eqref{eqn:XnrecursionB2} for~$X_n$ and the
induction hypothesis to compute
\begin{align*}
  &X_{n+4} = xX_{n+3}+xX_{n+1}-(2+y)X_{n+2}-X_n \\
  &= x
  \left( x^{n+3} - (n+3)x^{n+1}y - (n+3)x^{n+1}   + \frac{n(n+3)}{2} x^{n-1}y^2 + O(n) \right) \\
  &\qquad + x\bigl( x^{n+1} + O(n) \bigr) \\
  &\qquad - (2+y) \Bigl( x^{n+2} - (n+2)x^{n}y + O(n) \Bigr)
  + O(n) \\
  &= \left( x^{n+4} - (n+3)x^{n+2}y  - (n+3)x^{n+1} + \frac{n(n+3)}{2} x^{n}y^2 + O(n+1) \right) \\
  &\qquad + x^{n+2} - 2 x^{n+2}  - x^{n+2}y + (n+2)x^{n}y^2 + O(n+1) \\
  &=  x^{n+4} - (n+4) x^{n+2}y - (n+4)x^{n+2} \\
  &\omit\hfill$\displaystyle{} + \frac{(n+4)(n+1)}{2} x^{n}y^2 + O(n+1)$. 
\end{align*}
\par
We next use the recursion~\eqref{eqn:YnrecursionB2} for~$Y_n$ and the induction
hypothesis to compute
\begin{align*}
  Y_{n+4}
  &= yY_{n+3} + yY_{n+1} - (x^2-2y-2)Y_{n+2} - Y_n \\
  &= y\Bigl(
  \hdegdown{\uparrow\uparrow}{y^{n+3} - (n+3)x^2y^{n+1}} + x^3\cdot O(n) + O(n+2) \Bigr) \\
  &\quad + y\Bigl( y^{n+1} - (n+1)x^2y^{n-1} + x^3\cdot O(n-2) + O(n) \Bigr)  \\
  &\quad - (\hdegdownhw{\uparrow\uparrow}{x^2}-2y-2)
  \Bigl( \hdegdown{\uparrow\uparrow}{y^{n+2}} - (n+2)x^2y^{n} + x^3\cdot O(n-1) + O(n+1) \Bigr) \\
  &\quad - \Bigl( y^n - nx^2y^{n-2} + x^3\cdot O(n-3) + O(n-1) \Bigr) \\
  &=  y^{n+4} - (n+4)x^2y^{n+2} + x^3\cdot O(n+1) + O(n+3),
\end{align*}
where in we have marked with~\ovalbox{$\scriptstyle\uparrow\uparrow$} the quantities that form
the terms that are not absorbed into the big-$O$ error
terms.  
\end{proof}

\begin{theorem}
\label{theorem:AutBn}
For all $n\ge2$, we have
\[
\Aut(B_n) = \begin{cases}
  \bfmu_2&\text{if $n$ is odd,} \\
  1&\text{if $n$ is even.}
  \end{cases}
\]
More precisely, when~$n$ is odd, the non-trivial
element of~$\Aut(B_n)$ is~$\f(x,y)=(-x,y)$.
\end{theorem}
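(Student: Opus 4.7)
The plan is to mimic the strategy used for Theorem~\ref{theorem:AutAn}: parametrize a general affine automorphism as
\[
\ell(x,y) = (ax+by+e,\,cx+dy+f),\qquad ad-bc\ne0,
\]
and extract constraints on $(a,b,c,d,e,f)$ from the conjugation equation $\ell\circ B_n = B_n\circ\ell$ by comparing homogeneous components of each coordinate, working from top degree downward. Proposition~\ref{proposition:XnYnexpansionforBn} supplies exactly the leading-term information needed.

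First I would compare the degree-$n$ homogeneous parts. Since $X_n$ and $Y_n$ have leading terms $x^n$ and $y^n$, the degree-$n$ parts of $\ell\circ B_n$ are $ax^n+by^n$ and $cx^n+dy^n$, whereas those of $B_n\circ\ell$ are $(ax+by)^n$ and $(cx+dy)^n$. With $n\ge2$ in characteristic~$0$ this forces $ab=cd=0$ together with $a^n=a$, $b^n=b$, $c^n=c$, $d^n=d$; combined with $ad-bc\ne0$, only the diagonal case $b=c=0$ and the anti-diagonal case $a=d=0$ remain. To eliminate the anti-diagonal case I would inspect the coefficient of $x^2y^{n-2}$ in the degree-$n$ part of the first-coordinate equation. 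On one side $bY_n+e$ contributes $-nb$ by Proposition~\ref{proposition:XnYnexpansionforBn}; on the other side, after the substitution $u=by+e$, $v=cx+f$, all the terms of $X_n$ other than $u^n$ have total degree strictly less than~$n$, so the degree-$n$ piece of $X_n(by+e,cx+f)$ is $(by+e)^n$, whose $x^2y^{n-2}$ coefficient is~$0$. Hence $nb=0$, a contradiction.

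This leaves $\ell(x,y)=(ax+e,dy+f)$ with $a^{n-1}=d^{n-1}=1$. I would then peel off lower-degree pieces of the first-coordinate equation. The degree-$(n-1)$ part yields $e=0$ (from the $x^{n-1}$ coefficient) and $d=a^2$ (from the $x^{n-2}y$ coefficient, after applying $a^{n-1}=1$). The degree-$(n-2)$ part yields $f=a^2-1$ from the $x^{n-2}$ coefficient, while the $x^{n-4}y^2$ coefficient comes out consistent automatically. Finally, comparing $x^2y^{n-2}$ coefficients in the degree-$n$ part of the second-coordinate equation $dY_n=Y_n(ax,dy+f)$ gives $-nd=-na^2d^{n-2}$, which combined with $d=a^2$ and $a^{n-1}=1$ forces $a^2=1$. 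Thus $a=\pm1$, $d=1$, $e=f=0$, and $a=-1$ is admissible precisely when $(-1)^{n-1}=1$, i.e., when $n$ is odd.

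To close the argument, I would verify that $\f(x,y)=(-x,y)$ really lies in $\Aut(B_n)$ for odd~$n$. This is a short induction using the recursions~\eqref{eqn:XnrecursionB2} and~\eqref{eqn:YnrecursionB2}: since $x$ enters the $X$-recursion linearly and the $Y$-recursion only as $x^2$, checking the initial cases $B_0,\dots,B_3$ propagates to give $X_n(-x,y)=(-1)^nX_n(x,y)$ and $Y_n(-x,y)=Y_n(x,y)$ for all $n\ge0$, from which $B_n\circ\f=\f\circ B_n$ follows when $n$ is odd. The main obstacle I anticipate is bookkeeping around the unspecified $x^3\cdot O(n-3)$ piece inside $Y_n$, which a priori could contribute to the degree-$n$ coefficients of $Y_n(ax,dy+f)$. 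What rescues the argument is that this piece has $x$-degree at least~$3$, so the $x^2y^{n-2}$ coefficient—on which both the elimination of the anti-diagonal case and the derivation of $a^2=1$ rely—remains unaffected.
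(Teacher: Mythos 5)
Your overall strategy is the same as the paper's: feed the leading-term formulas of Proposition~\ref{proposition:XnYnexpansionforBn} into the commutation relation, compare coefficients from the top degree downward to pin the automorphism to $(\pm x,y)$, and then confirm via the recursions~\eqref{eqn:XnrecursionB2} and~\eqref{eqn:YnrecursionB2} that $(-x,y)$ commutes with $B_n$ exactly when $n$ is odd. That closing induction is identical to the paper's, and your intermediate relations ($d=a^2$, $f=a^2-1$, then $a^2=1$ from the $x^2y^{n-2}$ coefficient of the second coordinate) match the paper's, with your observation that the $x^3\cdot O(n-3)$ piece cannot pollute monomials of $x$-degree $\le 2$ being exactly the point that makes those steps legitimate.

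There is, however, a genuine gap in your opening step. The degree-$n$ homogeneous part of $Y_n$ is \emph{not} $y^n$: by Proposition~\ref{proposition:XnYnexpansionforBn} it is $y^n-nx^2y^{n-2}$ plus whatever degree-$n$ piece sits inside the unspecified $x^3\cdot O(n-3)$ term (your own later computation, which reads off $-nb$ from $bY_n$ at the monomial $x^2y^{n-2}$, already contradicts the claim). Consequently the degree-$n$ parts of $\ell\circ B_n$ and $B_n\circ\ell$ are not $(ax^n+by^n,\,cx^n+dy^n)$ and $\bigl((ax+by)^n,\,(cx+dy)^n\bigr)$, the relation $cd=0$ is not established, and the reduction to the two cases ``diagonal or anti-diagonal'' fails: after you (correctly) kill $b$, the automorphism could still be $\ell(x,y)=(ax+e,\,cx+dy+f)$ with $c\ne0$. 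Everything downstream silently assumes $c=0$; for instance, with $c$ present the $x^{n-1}$ coefficient of the first coordinate gives the relation $c=ae$ between the constant term of the first coordinate and the $x$-coefficient of the second (this is the paper's $d=ac$), not the vanishing of $e$. The gap is repairable with the tool you already use: once $b=0$, the $xy^{n-1}$ coefficient of the \emph{second}-coordinate equation is immune to both $-nx^2y^{n-2}$ and the $x^3$-divisible junk (each contributes only to monomials of $x$-degree $\ge 2$), and it yields $ncd^{n-1}=0$ with $d\ne0$, hence $c=0$; alternatively, follow the paper and carry $c$ through the first-coordinate expansion, where the $x^{n-1}$ coefficient gives $c=ae$ and the $x^{n-3}y$ coefficient then forces $e=0$ and hence $c=0$. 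With that patch the rest of your argument goes through and coincides with the paper's proof.
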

\begin{proof}
We write
\[
(X_n,Y_n) = B_n(x,y) \quad\text{with $X_n,Y_n\in\ZZ[x,y]$,}
\]
and we suppose that
\[
\f(x,y) = (ax+by+c,dx+ey+f) \in \Aut(\AA^2)
\]
commutes with~$B_n$.
\par
For any monomial~$M\in\ZZ[x,y]$ and any polynomial~$P\in\ZZ[x,y]$,
we let
\[
\Coef(M,P) = \text{coefficient of the monomial $M(x,y)$ in $P(x,y)$,}
\]
and if~$V=(v_1,\ldots,v_N)$ is any list, we write
\[
V[i] = v_i = \text{$i$th entry of the list.}
\]
\par
We use the formulas for~$X_n$ and~$Y_n$ in
Proposition~\ref{proposition:XnYnexpansionforBn} to compute
\begin{align}
  \label{eqn:BNxyn1a}
  \Coef\Bigl( xy^{n-1},\, (\f\circ B_n)(x,y)[1] \Bigr)
  &= \Coef\Bigl( xy^{n-1},\, aX_n + bY_n + c \bigr) \notag\\
  &=  0,  \\
  \label{eqn:BNxyn1b}
  \Coef\Bigl( xy^{n-1},\, (B_n \circ\f) (x,y)[1] \Bigr) 
  &= B_n(ax+by+c, dx+ey+f)[1] \notag\\
  &= \Coef\bigl( xy^{n-1},\, (ax+by+c)^n \bigr) \notag\\
  &= nax(by)^{n-1}.
\end{align}
Equating~\eqref{eqn:BNxyn1a} and~\eqref{eqn:BNxyn1b}
yields~$ab=0$.
\par
We next look at the coefficient of~$x^2y^{n-2}$. 
\begin{align}
  \label{eqn:BNxyn1c}
  \Coef\Bigl( x^2y^{n-2},\, (\f\circ B_n)(x,y)[1] \Bigr)
  &= \Coef\Bigl( x^2y^{n-2},\, aX_n + bY_n + c \bigr) \notag\\
  &= -bn,  \\
  \label{eqn:BNxyn1d}
  \Coef\Bigl( x^2y^{n-2},\, (B_n \circ\f) (x,y)[1] \Bigr) 
  &= B_n(ax+by+c, dx+ey+f)[1] \notag\\
  &= \Coef\bigl( x^2y^{n-2},\, (ax+by+c)^n \bigr) \notag\\
  &= \binom{n}{2}(ax)^2(by)^{n-2}.
\end{align}
Equating~\eqref{eqn:BNxyn1c} and~\eqref{eqn:BNxyn1d} gives
\[
-bn = \frac{n(n-1)}{2} a^2b^{n-2},
\quad\text{which gives}\quad
b=0~\text{or}~\frac{n-1}{2} a^2b^{n-3} = -1.
\]
Since we know from earlier that~\text{$ab=0$}, and since the
invertibility of~$\f$ implies that~$a$ and~$b$ cannot both vanish,
this proves that~\text{$b=0$}.
\par
Using the fact that~$b=0$, we look at the first few terms in the expansion
of the~$x$-coordinates of~\text{$\f\circ{B_n}$} and~\text{$B_n\circ\f$}. Thus
\begin{align*}
  (\f & \circ B_n)(x,y)[1] \\*
  &=  aX_n + c \\*
  &= ax^n - anx^{n-2}y - anx^{n-2} + a\smash[t]{\frac{n(n-3)}{2}} x^{n-4}y^2 + O(n-3), \\
  (B_n & \circ\f) (x,y)[1] \\*
  &= B_n(ax+c, dx+ey+f)[1] \\*
  &= (ax+c)^n
  - n(ax+c)^{n-2}(dx+ey+f)
  - n(ax+c)^{n-2}\\*
  &\qquad 
  + \frac{n(n-3)}{2} (ax+c)^{n-4}(dx+ey+f)^2
  + O(n-3) \\
  &= \Bigl( a^n x^n + n a^{n-1} c x^{n-1} + \tbinom{n}{2} a^{n-2} c^2 x^{n-2} \Bigr) \\*
  &\qquad
  - \Bigl( n a^{n-2} d x^{n-1} + n a^{n-2} e x^{n-2} y + n a^{n-2} f x^{n-2} \\*
  &\qquad\qquad + n(n-2) a^{n-3} c d x^{n-2} + n(n-2) a^{n-3} c e x^{n-3} y \Bigr) \\*
  &\qquad
  - n a^{n-2} x^{n-2}
  + \Bigl( \tfrac{n(n-3)}{2} a^{n-4} d^2 x^{n-2}
  + \tfrac{n(n-3)}{2} a^{n-4} e^2 x^{n-4} y^2 \Bigr) \\*
  &\qquad + O(n-3) \\
  &=
  a^n x^n
  + \Bigl( n a^{n-1} c - n a^{n-2} d \Bigr) x^{n-1} - n a^{n-2} e x^{n-2} y \\
  &\qquad
  + \left( \tbinom{n}{2} a^{n-2} c^2 - n a^{n-2} f - n(n-2) a^{n-3} c d - n a^{n-2}
  + \tfrac{n(n-3)}{2} a^{n-4} d^2 \right) x^{n-2} \\*
  &\qquad 
    - n(n-2) a^{n-3} c e  x^{n-3} y 
    + \tfrac{n(n-3)}{2} a^{n-4} e^2 x^{n-4} y^2
    + O(n-3).
\end{align*}
Equating the~$x^n$ coefficients gives
\[
a=a^n,\quad\text{so}\quad a^{n-1}=1.
\]
Then equating the~$x^{n-1}$ coefficients gives
\[
na^{n-2} (ac-d) = 0,\quad\text{so}\quad d = ac.
\]
Then equating the~$x^{n-2}y$ coefficients and using~\text{$a^{n-1}=1$}
gives
\[
n a^{n-2} (a^2 - e ) = 0, \quad\text{so}\quad e = a^2.
\]
In particular, we have~$e\ne0$.  Then equating the~$x^{n-3}y$
coefficients and using \text{$a^{n-1}=1$} and~\text{$e\ne0$} gives
\[
n(n-2) a^{n-3} c e = 0,\quad\text{so}\quad c=0.
\]
We note that this also implies that~\text{$d=ac=0$}. To recapitulate, we have shown that
\[
a^{n-1}=1,\quad b=0,\quad c=0,\quad d=0,\quad e=a^2.
\]
Equating the~$x^{n-2}$ coefficients and using these values gives
\[
f = a^2 - 1.
\]
Hence
\begin{align}
  \label{eqn:phiaxeyfan11}
  \f(x,y) &= (ax,ey+f) \notag\\*
  &= (ax,a^2y+a^2-1) \quad
  \text{for some $a$ satisfying $a^{n-1}=1$.} 
\end{align}
\par
We next look at the~$x^2y^{n-2}$ terms of the~$y$-coordinate~$Y_n$ of~$B_n(x,y)$. Thus
\begin{align*}
  \Coef\Bigl( x^2y^{n-2},\,  (\f\circ B_n)&(x,y)[2] \Bigr) \\*
  &= \Coef\Bigl( x^2y^{n-2},\,  a^2 Y_n + f \bigr)
  \quad\text{from \eqref{eqn:phiaxeyfan11},} \\*
  &= -n a^2
  \quad\text{from Proposition~\ref{proposition:XnYnexpansionforBn},}  \\
  \Coef\Bigl( x^2y^{n-2},\,  (B_n \circ\f)& (x,y)[2] \Bigr)  \\*
  &= B_n(ax, a^2 y+f)[2]
  \quad\text{from \eqref{eqn:phiaxeyfan11},} \\*
  &= \Coef\bigl( x^2y^{n-2},\, -n(ax)^2(a^2y+f)^{n-2} \bigr) \\*
  &= -n a^2 (a^2)^{n-2}
  \quad\text{from Proposition~\ref{proposition:XnYnexpansionforBn},}  \\*
  &= -n \quad\text{since $a^{n-1}=1$.}
\end{align*}
Hence
\[
a^2 = 1,\quad\text{which implies that}\quad e = a^2 = 1 \quad\text{and}\quad f = a^2 -1 = 0.
\]
We have thus shown that
\[
\f(x,y)=(ax,y)\quad\text{with $a=\pm1$.}
\]
It remains to check when~$\f(x,y)=(-x,y)$ commutes with~$B_n$. More generally, we
will prove by induction that for all $n\ge0$, we have
\begin{equation}
  \label{eqn:XnYnBnpm1}
  X_n(-x,y)=(-1)^{n} X_n(x,y)
  \quad\text{and}\quad
  Y_n(-x,y)=Y_n(x,y).
\end{equation}
The explicit formulas for~$B_0,\ldots,B_3$ show that
that~\eqref{eqn:XnYnBnpm1} is true for~$0\le{n}\le3$.
Assume now that~\eqref{eqn:XnYnBnpm1} is true up to~$n+3$. Then
\begin{align*}
  X_{n+4}(-x,y) &= x\bigl(X_{n+3}(-x,y)+X_{n+1}(-x,y)\bigr) \\*
  &\qquad -(2+y)X_{n+2}(-x,y)-X_n(-x,y), \\*
  &= (-x)\bigl((-1)^{n+3}X_{n+3}(x,y)+(-1)^{n+1}X_{n+1}(x,y)\bigr) \\*
  &\qquad -(-1)^{n+2}(2+y)X_{n+2}(x,y) -(-1)^{n}X_n(x,y) \\*
  &= (-1)^{n+4}X_{n+4}(x,y), \\
  Y_{n+4}(-x,y) &= y\bigl(Y_{n+3}(-x,y)+Y_{n+1}(-x,y)\bigr) \\*
  &\qquad -\bigl((-x)^2-2y-2\bigr)Y_{n+2}(-x,y)-Y_n(-x,y) \\*
  &= y\bigl(Y_{n+3}(x,y)+Y_{n+1}(x,y)\bigr) \\*
  &\qquad -(x^2-2y-2)Y_{n+2}(x,y)-Y_n(x,y) \\*
  &= Y_{n+4}(x,y).
\end{align*}
It follows from~\eqref{eqn:XnYnBnpm1} that
\[
\f(x,y)=(-x,y) \in \Aut(B_n)
\quad\Longleftrightarrow\quad
n\equiv1\pmodintext{2}.
\qedhere
\]
\end{proof}

\section{Folding Maps for the Lie Algebra $\Lie{G}_2$}
\label{section:foldingGn}
We consider the folding maps~$\Fold_n[\Lie{G}_2]$.
To ease notation in this section, we let
\begin{equation}
  \label{eqn:XnYnGninZxy}
  (X_n,Y_n) = G_n(x,y) = \Fold_n[\Lie{G}_2](x,y),
  \quad\text{so $X_n,Y_n\in\ZZ[x,y]$.}
\end{equation}
The paper~\cite{MR937529} gives explicit formulas
for~$G_0,\ldots,G_5$, which for the convenience of the reader we have
reproduced in Figure~\ref{figure:Gn1to5hots}.  The subsequent values
of~$G_n(x,y)$ are then determined by the following recursions:
\begin{align}
  X_{n+6} &= x(X_{n+5}+X_{n+1})-(x+y+3)(X_{n+4}+X_{n+2}) \notag\\*
  &\qquad\qquad{} + (x^2-2y-4)X_{n+3}-X_n, \label{eqn:GXn6recursion}\\*
  Y_{n+6} &= y(Y_{n+5}+Y_{n+1})-(x^3-3xy-9x-5y-9))(Y_{n+4}+Y_{n+2}) \notag\\*
  &\qquad\qquad{} + (y^2-2x^3+6xy+18x+12y+8)Y_{n+3}-Y_n. \label{eqn:GYn6recursion}
\end{align}

\begin{figure}{
\small
\[
\begin{array}{|c|l|l|} \hline
  n & \text{$x$-coordinate of $G_n$} & \text{$y$-coordinate of $G_n$} \\ \hline\hline
    0 & 6 & 6 \\ \hline
    1
    & x & y \\ \hline
    2
    & x^2 - 2 x - 2 y - 6
    & \begin{array}{l}
      -2 x^3 + 6 x y + y^2 + 18 x + 10 y + 18
      \end{array}
    \\ \hline
    3 & x^3 - 3 x y - 9 x - 6 y - 12
    & \begin{array}{l}
       -3 x^3 y - 6 x^3 + 9 x y^2 + y^3 + 45 x y\\
       + 18 y^2 + 54 x + 63 y + 60\\
    \end{array}
    \\ \hline
    4 &
    \begin{array}{l}
       x^4 - 4 x^2 y - 10 x^2 - 4 x y \\
       + 2 y^2 - 8 x + 8 y + 6 \\
    \end{array}
    & \begin{array}{l}
          2 x^6 - 12 x^4 y - 4 x^3 y^2 - 36 x^4 - 28 x^3 y\\
         + 18 x^2 y^2 + 12 x y^3 + y^4 - 40 x^3 \\
         + 108 x^2 y + 120 x y^2 + 24 y^3 + 162 x^2 \\
         + 372 x y + 134 y^2 + 360 x + 280 y + 198
      \end{array}
    \\ \hline
    5 &
    \begin{array}{l}
       x^5 - 5 x^3 y - 15 x^3 - 5 x^2 y \\
       + 5 x y^2 - 10 x^2 + 35 x y \\
       + 10 y^2 + 55 x + 50 y + 60\\
    \end{array}
    & \begin{array}{l}
        5 x^6 y + 10 x^6 - 30 x^4 y^2 - 5 x^3 y^3 \\
        - 150 x^4 y - 65 x^3 y^2 + 45 x^2 y^3 + 15 x y^4 \\
        + y^5 - 180 x^4 - 205 x^3 y + 360 x^2 y^2\\
        + 240 x y^3 + 30 y^4 - 190 x^3 + 945 x^2 y\\
        + 1200 x y^2 + 255 y^3 + 810 x^2 + 2415 x y\\
        + 920 y^2 + 1710 x + 1495 y + 900\\
      \end{array}
    \\ \hline
\end{array}
\]
}
\label{figure:Gn1to5hots}
\caption{The first few $G_n$ polynomials}
\end{figure}

\begin{proposition}
\label{proposition:XnYnGnformulas}
We write~$(X_n,Y_n)=G_n(x,y)$ as in~\eqref{eqn:XnYnGninZxy} and
we use big-$O$ notation as described in~\eqref{definition:bigOnotation}.
\begin{parts}
\Part{(a)}
The values of~$X_0,\ldots,X_5$ are given in Figure~\ref{figure:Gn1to5hots}.
For~$n\ge5$, the top order terms of~$X_n$ satisfy
\begin{align}
\label{eqn:Xntop2terms}
X_n =
\smash[b]{
\hdegdownhw{\deg n}{x^n\vphantom{\frac{n^2}{2}}}
- \hdegdown{\deg n-1}{n x^{n-2}y\vphantom{\frac{n^2}{2}}}
+ \hdegdown{\deg n-2}{\frac{n^2-3n}{2} x^{n-4} y^2 - n x^{n-3}y - 3 n x^{n-2}}
}{\vrule height0pt width0pt depth3ex}
+ O(n-3) \\[-2ex]
\notag
\end{align}
\Part{(b)}
We have~$Y_0=6$, and for~$n\ge1$, the top order term of~$Y_n$ is given by
\[
  \label{eqn:Yntop2terms}
  Y_n = \begin{cases}
    (-1)^{n/2} 2 x^{3n/2} + O\left(\frac{3n-2}{2}\right)
    &\text{if $n$ is even,}\\[2\jot]
    (-1)^{(n-1)/2} n x^{(3n-3)/2} y + O\left(\frac{3n-3}{2}\right)
    &\text{if $n$ is odd.}\\
  \end{cases}
\]
\end{parts}
\end{proposition}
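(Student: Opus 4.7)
The plan is to prove both parts by strong induction on $n$ using the six-term recursions \eqref{eqn:GXn6recursion} and \eqref{eqn:GYn6recursion}, with the base cases $0 \le n \le 5$ being directly verifiable from Figure~\ref{figure:Gn1to5hots}. In each inductive step one assumes the formula up through $X_{n+5}$ or $Y_{n+5}$ and derives it for $X_{n+6}$ or $Y_{n+6}$.

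For part (a), I would first observe that in \eqref{eqn:GXn6recursion} the coefficient polynomials $x$, $x+y+3$, and $x^2-2y-4$ have total degrees $1$, $1$, and $2$, so only the three summands $xX_{n+5}$, $-(x+y+3)X_{n+4}$, and $(x^2-2y-4)X_{n+3}$ can contribute to $X_{n+6}$ in degrees $\ge n+4$; the remaining summands $xX_{n+1}$, $-(x+y+3)X_{n+2}$, and $-X_n$ automatically fall into the $O(n+3)$ error. I would then substitute the inductive formulas into these three products and collect the coefficients of the five monomials $x^{n+6}$, $x^{n+4}y$, $x^{n+2}y^2$, $x^{n+3}y$, and $x^{n+4}$. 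For instance, the $x^{n+3}y$ contributions assemble as $-(n+5) + (n+4) - (n+5) = -(n+6)$, matching $-(n+6)x^{(n+6)-3}y$; the four other coefficients match by analogous short computations (e.g.\ the $x^{n+2}y^2$ coefficient becomes $\frac{(n+5)(n+2)}{2} + (n+4) = \frac{(n+6)(n+3)}{2}$).

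For part (b), the structural observation is that $Y_n$ has total degree roughly $3n/2$, so in \eqref{eqn:GYn6recursion} the cubic monomial $-x^3$ in the coefficient $-(x^3 - 3xy - \cdots)$ is the unique term that keeps pace with the growth of $\deg Y_n$; every other monomial in every other coefficient falls strictly behind. I would split on the parity of $n+6$. When $n+6$ is even, a degree count shows that only $-x^3 Y_{n+4}$ attains the top degree $3(n+6)/2$, and by induction its leading term is $-x^3\cdot(-1)^{(n+4)/2}\,2\,x^{3(n+4)/2} = (-1)^{(n+6)/2}\,2\,x^{3(n+6)/2}$, using $(-1)^{(n+4)/2} = -(-1)^{(n+6)/2}$. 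When $n+6$ is odd, exactly two summands reach the top degree $(3n+17)/2$, namely $yY_{n+5}$ and $-x^3 Y_{n+4}$; combining their leading terms via $(-1)^{(n+3)/2} = -(-1)^{(n+5)/2}$ yields the sum $(-1)^{(n+5)/2}\bigl(2 + (n+4)\bigr)\,x^{(3n+15)/2}y = (-1)^{(n+5)/2}(n+6)\,x^{(3n+15)/2}y$, matching the claim.

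The main obstacle is careful bookkeeping rather than any conceptual difficulty. For part (a), one must track five distinct monomials through three products, and verify that the sub-leading pieces of $X_{n+k}$ multiplied by the sub-leading parts of $x+y+3$ and $x^2-2y-4$ combine correctly; most of the work is arithmetic in the coefficients. For part (b), the delicate point is the parity-coupled cancellation in the odd case, where the signs in $Y_{n+4}$ (odd index) and $Y_{n+5}$ (even index) must conspire correctly to produce $n+6$. In every other case, a crude degree estimate suffices to dispatch contributions into the big-$O$ remainder.
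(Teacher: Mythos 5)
Your proposal is correct and follows essentially the same route as the paper: strong induction via the six-term recursions, discarding the low-index summands into the big-$O$ error, tracking the five top monomials for $X_{n+6}$, and splitting on parity for $Y_{n+6}$ (your coefficient checks $-(n+5)+(n+4)-(n+5)=-(n+6)$, $\frac{(n+5)(n+2)}{2}+(n+4)=\frac{(n+6)(n+3)}{2}$, and the $\lambda_{n+6}=\lambda_{n+5}-\lambda_{n+4}$ combination in the odd case all match the paper's computation). One small caveat: your assertion that the base cases $0\le n\le 5$ of \eqref{eqn:Xntop2terms} are directly verifiable is false for $n=4$, where the coefficient of $x^{n-2}$ is $-10$ rather than $-3n=-12$; the induction survives because the recursion step only uses the top two degree levels of $X_{n+4}$ and $X_{n+3}$ (which are correct for all indices $\ge 3$), a point the paper's own phrasing also glosses over.
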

\begin{proof}
(a)\enspace
The formula~\eqref{eqn:Xntop2terms} for~$X_n$ is true
for~$n=5$ by inspection from Figure~\ref{figure:Gn1to5hots}.
Assume now that~\eqref{eqn:Xntop2terms} is true up to~$n+5$ for some~$n\ge0$.
The recursion~\eqref{eqn:GXn6recursion} says that
\begin{align}
  \label{eqn:GXnrecursion0}
  X_{n+6} &= x(X_{n+5}+X_{n+1}) - (x+y+3)(X_{n+4}+X_{n+2}) \notag\\*
  &\qquad\qquad{}
  + (x^2-2y-4)X_{n+3} - X_n
\end{align}
We use the induction hypothesis to compute the various pieces
of~$X_{n+6}$ up to~$O(n+3)$. 
\begin{align}
  \label{eqn:GXnrecursion1}
  x & (X_{n+5}+X_{n+1}) \notag\\*
  &=
  x^{n+6}-(n+5)x^{n+4}y
  +\frac{(n+5)(n+2)}{2} x^{n+2} y^2 - (n+5) x^{n+3}y \notag\\*
  &\qquad  - 3 (n+5) x^{n+4} + O(n+3)
  \quad\text{from \eqref{eqn:Xntop2terms}.} \\
  \label{eqn:GXnrecursion2}
  (x & +y+3)( X_{n+4}+X_{n+2})  \notag\\*
  &=   (x+y+3) X_{n+4} + O(n+3) \notag\\*
  &= (x+y+3)x^{n+4} - (x+y) (n+4) x^{n+2} y + O(n+3) \notag\\*
  &= x^{n+5} + x^{n+4} y + 3 x^{n+4} \notag\\*
  &\qquad - (n+4)x^{n+3} y - (n+4) x^{n+2} y^2 + O(n+3). \\
  \label{eqn:GXnrecursion3}
  (x^2 & -2y-4) X_{n+3}  -  X_n \notag\\*
  &= (x^2-2y)X_{n+3} + O(n+3) \notag\\*
  &= (x^2-2y)\bigl(x^{n+3} - (n+3)x^{n+1}y + O(n+1) \bigr) \notag\\*
  &= x^{n+5} - 2x^{n+3}y - (n+3)x^{n+3}y + O(n+3) \notag\\*
  &= x^{n+5} - (n+5)x^{n+3}y + O(n+3) .
\end{align}
Substituting~\eqref{eqn:GXnrecursion1},~\eqref{eqn:GXnrecursion2}
and~\eqref{eqn:GXnrecursion3} into~\eqref{eqn:GXnrecursion0} yields
\begin{align*}
  X_{n+6}
  &=  \biggl( x^{n+6}-(n+5)x^{n+4}y
  +\frac{(n+5)(n+2)}{2} x^{n+2} y^2 \\*
  &\omit\hfill\text{$\displaystyle{} - (n+5) x^{n+3}y  -  3 (n+5) x^{n+4}  \biggr)$} \\*
  &\qquad -  \Bigl( x^{n+5} + x^{n+4} y + 3 x^{n+4} 
                - (n+4)x^{n+3} y - (n+4) x^{n+2} y^2 \Bigr) \\*
  &\qquad +  \Bigl( x^{n+5} - (n+5)x^{n+3}y\Bigr) + O(n+3) \\*
  &= 
  x^{n+6} - (n+6) x^{n+4} y + \frac{(n+6)^2-3(n+6)}{2} x^{n+2} y^2\\*
  &\qquad - (n+6) x^{n+3} y - 3 (n+6) x^{n+4} + O(n+3).
\end{align*}
This completes the induction proof that~\eqref{eqn:Xntop2terms} holds
for all~$n\ge0$.
\par\noindent(b)\enspace
To ease notation,
we let
\begin{equation}
  \label{eqn:coeffYnG}
  \l_n = \begin{cases}
    (-1)^{n/2} 2
    &\text{if $n$ is even,}\\
    (-1)^{(n-1)/2} n
    &\text{if $n$ is odd.}\\
  \end{cases}
\end{equation}
An easy computation shows that the sequence~$\l_n$ described by~\eqref{eqn:coeffYnG}
satisfies the recursion
\begin{equation}
  \label{eqn:GYcoefrecursion}
  \l_{n+2} =
  \begin{cases}
    -\l_{n} &\text{if $n$ is even,} \\
    \l_{n+1}-\l_{n} &\text{if $n$ is odd.} \\
  \end{cases}
\end{equation}
\par
The formula~\eqref{eqn:Yntop2terms} for~$Y_n$ is true
for~$0\le{n}\le5$ by inspection from Figure~\ref{figure:Gn1to5hots}.
Assume now that~\eqref{eqn:Yntop2terms} is true up to~$n+5$.
\par
We consider first the case that~$n=2k$ is even.  Then the induction
hypothesis and the recursion~\eqref{eqn:GYn6recursion} give
\begin{align*}
  Y_{n+6}
  &=
  \hdegdown{ y\cdot O(3k+6) }{ y(Y_{2k+5}+Y_{2k+1}) }
  -
  \hdegdown{ (x^3+O(2))\cdot ( \l_{2k+4}x^{3k+6}+O(3k+5) ) }
           { (x^3-3xy-9x-5y-9)(Y_{2k+4}+Y_{2k+2}) }
  \\*
  &\qquad\qquad{} +
  \hdegdown{ O(3)\cdot O(3k+3) }{ (y^2-2x^3+6xy+18x+12y+8)Y_{2k+3} }
  -
  \hdegdown{ O(3k) }{ Y_{2k} },\\
  &=  -\l_{2k+4}x^{3k+9} + O(3k+8) \\
  &=  -\l_{n+4}x^{3(n+6)/2} + O\left(\frac{3(n+6)}{2}-1\right) \\
  &=  \l_{n+6}x^{3(n+6)/2} +  O\left(\frac{3(n+6)}{2}-1\right)
  \quad\text{from \eqref{eqn:GYcoefrecursion}.}
\end{align*}
\par
Similarly, if~$n=2k+1$ is even, then
\begin{align*}
  Y_{n+6}
  &=
  \hdegdown{ y\cdot (\l_{2k+6}x^{3k+9}+O(3k+8)) }{ y(Y_{2k+6}+Y_{2k+2}) }
  -
  \hdegdown{ (x^3+O(2))\cdot ( \l_{2k+5}x^{3k+6}y+O(3k+6) ) }
           { (x^3-3xy-9x-5y-9)(Y_{2k+5}+Y_{2k+3}) }
  \\*
  &\qquad\qquad{} +
  \hdegdown{ O(3)\cdot O(3k+6) }{ (y^2-2x^3+6xy+18x+12y+8)Y_{2k+4} }
  -
  \hdegdown{ O(3k+1) }{ Y_{2k+1} }, \\
  & = (\l_{2k+6}-\l_{2k+5})x^{3k+9}y + O(3k+9) \\
  &= (\l_{n+5}-\l_{n+4}) x^{(3n+15)/2} y + O\left(\frac{3n+15}{2}\right) \\
  &= \l_{n+6} x^{(3(n+6)-3)/2} y + O\left(\frac{3(n-6)-3}{2}\right)
  \quad\text{from \eqref{eqn:GYcoefrecursion}.} 
\end{align*}
This completes the induction proof that~\eqref{eqn:Yntop2terms} holds
for all~$n\ge1$.
\end{proof}

We use Proposition~\ref{proposition:XnYnGnformulas} to
compute the automorphism group of~$G_n$.

\begin{theorem}
\label{theorem:GnAut}
For all~$n\ge2$, we have
\[
\Aut(G_n) = 1.
\]
\end{theorem}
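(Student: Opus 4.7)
Mimicking the strategies used in Theorems~\ref{theorem:AutAn} and~\ref{theorem:AutBn}, I will suppose that $\f(x,y) = (ax+by+c,\,dx+ey+f) \in \Aut(\AA^2)$ commutes with $G_n$ and show $\f$ must be the identity. The distinctive feature of $\Lie{G}_2$ is the dramatic imbalance of degrees: by Proposition~\ref{proposition:XnYnGnformulas}, $\deg X_n = n$ whereas $\deg Y_n$ equals $3n/2$ (for $n$ even) or $(3n-1)/2$ (for $n$ odd). This asymmetry is what forces the automorphism group to be trivial. For $2\le n\le 5$ the claim can be verified directly from the explicit formulas in Figure~\ref{figure:Gn1to5hots}, so I will focus on $n\ge 6$, where the leading-term asymptotics apply.

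A degree count of the $x$-coordinate identity $aX_n + bY_n + c = X_n(ax+by+c,\,dx+ey+f)$ immediately forces $b=0$, since the right side has degree at most $n$ whereas $bY_n$ would contribute a term of degree strictly greater than $n$. With $b=0$ in hand, I will successively compare coefficients of the monomials $x^n$, $x^{n-1}$, $x^{n-2}y$, and $x^{n-3}y$ on the two sides, using the expansion
\[
  X_n = x^n - nx^{n-2}y + \tfrac{n^2-3n}{2}x^{n-4}y^2 - nx^{n-3}y - 3nx^{n-2} + O(n-3)
\]
from Proposition~\ref{proposition:XnYnGnformulas}(a). These equations will yield, in turn, $a^{n-1}=1$ (so $a\ne 0$), $d=ac$, $e=a^2$, and finally $c=a-1$. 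The last relation is the main point of departure from the $B_n$ proof: the monomial $-nx^{n-3}y$ appearing in $X_n$ for $\Lie{G}_2$ (but absent from the analogous formula for $\Lie{B}_2$) contributes an extra $-nea^{n-3}$ to the $x^{n-3}y$ coefficient, which, after applying $d=ac$ and $e=a^2$, produces $c=a-1$ rather than $c=0$.

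For the $y$-coordinate identity I will split on the parity of $n$ and invoke Proposition~\ref{proposition:XnYnGnformulas}(b). When $n$ is even, the leading term $(-1)^{n/2}\,2x^{3n/2}$ of $Y_n$ forces $e=a^{3n/2}$; combined with $e=a^2$ this gives $a^{3n/2-2}=1$, and since $3(n-1)-2(3n/2-2)=1$ we have $\gcd(n-1,\,3n/2-2)=1$, hence $a=1$. When $n$ is odd, the top $Y_n$-monomial is $(-1)^{(n-1)/2}n\,x^{(3n-3)/2}y$; the coefficient of $x^{(3n-1)/2}$ must vanish on the left side (since $\deg X_n = n < (3n-1)/2$ and the top $Y_n$-monomial carries a $y$), so equating the right side's $x^{(3n-1)/2}$ coefficient to zero forces $d=0$, after which $d=ac$ with $a\ne 0$ gives $c=0$, and then $c=a-1$ yields $a=1$. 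In either parity we arrive at $\f(x,y)=(x,\,y+f)$, and the $x$-coordinate equation then reduces to $X_n(x,y)=X_n(x,y+f)$; since $X_n$ contains the nonzero monomial $-nx^{n-2}y$, this forces $f=0$, completing the proof. The principal technical obstacle is the bookkeeping for the $x^{n-3}y$ coefficient, which receives contributions from four of the five explicit monomials of $X_n$; in particular the cross-term $2de$ arising from the expansion of $(ax+c)^{n-4}(dx+ey+f)^2$ must be combined with the other contributions before substituting $d=ac$ and $e=a^2$ to reach the clean relation $c=a-1$.
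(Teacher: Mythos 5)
Your proposal is correct and follows essentially the same route as the paper: kill $b$ via the degree imbalance between $X_n$ and $Y_n$, extract $a^{n-1}=1$, $d=ac$, $e=a^2$, and the relation for $c$ from the coefficients of $x^n$, $x^{n-1}$, $x^{n-2}y$, $x^{n-3}y$ in the first coordinate, then use the parity-dependent leading term of $Y_n$ (a gcd argument for even $n$, vanishing of the $x^{(3n-1)/2}$ coefficient for odd $n$) to force $a=1$. The only differences are cosmetic: you defer determining $f$ until after $a=1$ is known, you handle $2\le n\le 5$ by direct inspection (which is actually needed, since the $X_n$ expansion is only asserted for $n\ge5$), and your sign $c=a-1$ rather than the paper's $c=1-a$ appears to be the correct one, though either convention yields $c=0$ once $a=1$.
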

\begin{proof}
We write
\[
(X_n,Y_n) = G_n(x,y) \quad\text{with $X_n,Y_n\in\ZZ[x,y]$,}
\]
and we suppose that
\[
\f(x,y) = (ax+by+c,dx+ey+f) \in \Aut(\AA^2)
\]
commutes with~$G_n$.
\par
Our assumption that~$n\ge2$ implies that
\[
  \text{$n$ odd}
  \;\Longrightarrow\;
  \frac{3n-3}{2} \ge n
  \quad\text{and}\quad
  \text{$n$ even}
  \;\Longrightarrow\;
  \frac{3n-2}{2} \ge n.
\]
It follows from Proposition~\ref{proposition:XnYnGnformulas}(a)
that~$X_n=O(n)$, so the formulas for~$X_n$ and~$Y_n$ in
Proposition~\ref{proposition:XnYnGnformulas} give
\begin{align*}
  (\f\circ G_n)(x,y)[1]
  &= aX_n + bY_n + c \\
  &= 
  \begin{cases}
    bx^{3n/2} + O\left(\frac{3n-2}{2}\right)&\text{if $n$ is even,} \\
    bx^{(3n-3)/2}y + O\left(\frac{3n-3}{2}\right)&\text{if $n$ is odd,} \\
  \end{cases} \\*
 (G_n \circ\f) (x,y)[1] 
  &= G_n(ax+by+c, dx+ey+f)[1] \notag\\*
  &= (ax+by+c)^n + O(n-1)\\*
  &= O(n).
\end{align*}
We observe that if~$b\ne0$, then the highest degree term of the
composition $(\f\circ{G_n})(x,y)[1]$ is strictly larger than the
highest degree term of $(G_n\circ\f)(x,y)[1]$, so it would not be
possible for them to be equal.  Therefore
\[
b=0 \quad\text{and}\quad \f(x,y) = (ax+c,dx+ey+f).
\]
\par
We now use Proposition~\ref{proposition:XnYnGnformulas}(a)
to compare the next few terms in the expansions of the first
coordinates of~\text{$G_n\circ\f$} and~\text{$\f\circ{G_n}$}, keeping
in mind that~$b=0$. Thus
\begin{align}
  \label{eqn:Gncircfxy11}
 (G_n \circ\f)  (x,y)[1]
  &= G_n(ax+c, dx+ey+f)[1] \notag\\*
  &= (ax+c)^n
  - n(ax+c)^{n-2}(dx+ey+f) \notag\\*
  &\qquad + \frac{n^2-3n}{2}(ax+c)^{n-4}(dx+ey+f)^2 \notag\\*
  &\qquad  - n(ax+c)^{n-3}(dx+ey+f) \notag\\*
  &\qquad - 3n(ax+c)^{n-2}
  + O(n-3), \\*
  \label{eqn:fcircGnxy11}
  (\f \circ G_n)  (x,y)[1] 
  &= aX_n + c \notag\\*
  &= ax^n-anx^{n-2}y  + \smash[t]{ \frac{n^2-3n}{2} } ax^{n-4} y^2 \notag\\*
  &\qquad
  - n ax^{n-3}y
  - 3 n ax^{n-2}
  +O(n-3).
\end{align}
\par
We equate the degree~$n$ terms of~\eqref{eqn:Gncircfxy11}
and\eqref{eqn:fcircGnxy11}. This yields
\[
(ax)^n = \Deg_{n}\bigl((G_n \circ\f)(x,y)[1]\bigr)
= \Deg_{n}\bigl((\f \circ G_n)(x,y)[1]\bigr) = ax^n,
\]
and hence
\[
a^{n-1}=1.
\]
(We cannot have~$a=0$, since we already know that~$b=0$, and if~$a=b=0$,
the~$\f$ would not be invertible.)
\par
We next pull out the degree~$n-1$ terms of~\eqref{eqn:Gncircfxy11}
and\eqref{eqn:fcircGnxy11}, dividing by~$n$ to simplify the expressions.
Thus
\begin{align}
  \label{Degnneg1GnA}
  \frac{1}{n}\Deg_{n-1}\bigl((G_{n} \circ\f)(x,y)[1]\bigr)
  &= (ax)^{n-1}c - (ax)^{n-2}(dx+ey), \\*
  \label{Degnneg1GnB}
  \frac{1}{n}\Deg_{n-1}\bigl((\f\circ G_{n})(x,y)[1]\bigr)
  &= -ax^{n-2}y.
\end{align}
Setting~\eqref{Degnneg1GnA} equal to~\eqref{Degnneg1GnB},
multiplying by~$a$, and using~$a^{n-1}=1$,
we find that
\[
  (ac - d)x^{n-1} +(a^2 - e)x^{n-2}y = 0.
\]
Hence
\begin{equation}
  \label{eqn:deqaceeqa2}
  d = ac \quad\text{and}\quad e = a^2.
\end{equation}
\par
We next pull out the degree~$n-2$ terms of~\eqref{eqn:Gncircfxy11}
and\eqref{eqn:fcircGnxy11}, using~$a^n=a$ and dividing by~$n$ to
simplify the expressions. We find, after some algebra,  that
\begin{align*}
\frac{1}{n}\Deg_{n-2}\bigl((G_n & \circ\f)(x,y)[1]\bigr) \\*
&= 
\biggl(
\frac{n-1}{2} a^{-1}c^2
-  a^{-1} f - (n-2) a^{-2} c d \\*
&\hspace*{6em}
+ \frac{n-3}{2}a^{-3} d^2
- a^{-2} d
-3 a^{-1} 
\biggl) x^{n-2}  \\*
&+
\biggl(
2\frac{n-3}{2}a^{-3} de
- (n-2) a^{-2} c e
-a^{-2} e
\biggr) x^{n-3} y  \\*
&+  \biggl(\frac{n-3}{2}a^{-3}e^2\biggr)  x^{n-4}y^2.
\end{align*}
Similarly, the degree~$n-2$ terms of~$(\f \circ G_n)(x,y)[1]$ are
\begin{align*}
  \frac{1}{n}\Deg_{n-2}\bigl((\f \circ G_n)(x,y)[1]\bigr) 
  =
  - 3  ax^{n-2}
  -  ax^{n-3}y
  + \frac{n-3}{2} ax^{n-4} y^2.
\end{align*}
Setting
\[
\Deg_{n-2} \bigl( (G_n\circ\f)(x,y)[1]\bigr)
=
\Deg_{n-2} \bigl(\f\circ G_n)(x,y)[1]\bigr)
\]
we see that the coefficients of~$x^{n-4}y^2$ are equal, since we
already know from~\eqref{eqn:deqaceeqa2} that~\text{$e=a^2$}.
\par
Next equating the coefficients of~$x^{n-3}y$, we find that
\[
(n-3)a^{-3} de
- (n-2) a^{-2} c e
-a^{-2} e
=
-a.
\]
Using~$d=ac$ and~$e=a^2$ from~\eqref{eqn:deqaceeqa2}, after some algebra this becomes
\[
c = 1 - a.
\]
So we now know that
\begin{equation}
  \label{eqn:abcdevals}
    a^{n-1}=1,\quad
    b=0,\quad
    c=1-a,\quad
    d=ac=a-a^2,\quad
    e=a^2.
\end{equation}
\par
Finally, equating the coefficients of~$x^{n-2}$, we find that
\begin{align}
  \label{xn2coefsG}
    \frac{n-1}{2} a^{-1}c^2
    -  a^{-1} f - (n-2) a^{-2} c d 
    & + \frac{n-3}{2}a^{-3} d^2 \notag\\*
    & - a^{-2} d
    -3 a^{-1} + 3a = 0.
\end{align}
Substituting~\eqref{eqn:abcdevals} into~\eqref{xn2coefsG}
yields
\[
f = (a-1)(3a+4).
\]
Hence
\begin{equation}
  \label{eqn:phiax1aaa2xa2ya13a4}
  \f(x,y) = \bigl( ax+(1-a),\, (a-a^2)x + a^2y + (a-1)(3a+4) \bigr).
\end{equation}
In particular, we see that~$a=1$ implies that~$\f(x,y)=(x,y)$ is the identity map. 
\par
We now look at the~$y$-coordinates, where it is simpler to consider~$n$ even and odd
separately. We start with the case that~$n$ is even, say
\[
n=2k.
\]
Then
\begin{align*}
  (G_n \circ\f) & (x,y)[2]  \\
  &= G_n(ax+c, acx+a^2y+f)[2] \quad\text{from \eqref{eqn:abcdevals},} \\ 
  &= (-1)^{k} 2 (ax+c)^{3k} + O(3k-1)
  \quad\text{from Proposition~\ref{proposition:XnYnGnformulas}(b),} \\
  &= (-1)^{k} 2 a^{3k} x^{3k} + O(3k-1), \\
  (\f \circ G_n) & (x,y)[2] \\
  &= acX_n + a^2Y_n + f
  \quad\text{from \eqref{eqn:abcdevals},} \\
  &=  (-1)^{k} 2 a^2 x^{3k} + O(3k-1)
   \quad\text{from Proposition~\ref{proposition:XnYnGnformulas}(b).}
\end{align*}
Using~$G_n\circ\f=\f\circ{G_n}$, we find that
\[
(-1)^{k} 2 a^{3k} x^{3k} = (-1)^{k} 2 a^2 x^{3k} 
\quad\text{and hence that}\quad
a^{3k-2} = 1.
\]
But we also know from~\eqref{eqn:abcdevals} that~$a^{n-1}=a^{2k-1}=1$, and hence
\[
a^{\gcd(3k-2,2k-1)} = 1.
\]
We have
\[
2(3k-2) - 3(2k-1) = -1,\quad\text{so}\quad \gcd(3k-2,2k-1)=1.
\]
Hence if~$n$ is even, then~$a=1$.
\par
We next consider the case that~$x$ is odd, say
\[
n=2k+1.
\]
Then
\begin{align*}
  (G_n \circ\f)(x,y)[2] 
  &= G_n(ax+c, acx+a^2y+f)[2] \quad\text{from \eqref{eqn:abcdevals},} \\ 
  &=  (-1)^{k} n (ax+c)^{3k} (acx+a^2y+f) + O(3k) \\
  &\omit\hfill\text{from Proposition~\ref{proposition:XnYnGnformulas}(b),} \\
  &= (-1)^{k} n a^{3k}x^{k} (acx+a^2y) + O(3k), \\
  (\f \circ G_n) (x,y)[2] 
  &= acX_n + a^2Y_n + f
  \quad\text{from \eqref{eqn:abcdevals},} \\
  &= (-1)^{k} n a^2 x^{3k} y + O(3k)
   \quad\text{from Proposition~\ref{proposition:XnYnGnformulas}(b).}
\end{align*}
Using~$G_n\circ\f=\f\circ{G_n}$, we find that
\[
(-1)^{k} n a^{3k+1}cx^{3k+1} + (-1)^{k} n a^{3k+2}x^{3k}y
= (-1)^{k} n a^2 x^{3k} y.
\]
The coefficient of~$x^{3k+1}$ must vanish, which shows that~$c=0$, and
then~\eqref{eqn:abcdevals} forces~$a=1$. So we have proven that
if~$n$ is odd, then~\text{$a=1$}.
\par
To recapitulate, we have shown that~$a=1$ for all~$n\ge2$.
Then~\eqref{eqn:phiax1aaa2xa2ya13a4} tells us that~$\f(x,y)=(x,y)$ is
the identity map, which completes the proof that
\[
\Aut(G_n) = 1\quad\text{for all $n\ge2$.}
\qedhere
\]
\end{proof}

\section{Geometric Properties of $2$-Dimensional Folding Maps}
\label{section:geomprops}

In this section we study the maps on~$\PP^2$ induced
by~$2$-dimensional folding maps.

\begin{theorem}
\label{theorem:morphfoldQ}
\leavevmode
\begin{parts}
\Part{(a)}
Let~$\overline{A}_n(X,Y,Z)$ be the homogenization of~$A_n=\Fold_n[\Lie{A}_2]$.
Then
\[
\overline{A}_n:\PP^2\longrightarrow\PP^2\quad\text{is a morphism of degree $n$.}
\]
\Part{(b)}
Let~$\overline{B}_n(X,Y,Z)$ be the homogenization of~$B_n=\Fold_n[\Lie{B}_2]$.
Then
\[
\overline{B}_n:\PP^2\longrightarrow\PP^2\quad\text{is a morphism of degree $n$.}
\]
\Part{(c)}
Let~$\overline{G}_n(X,Y,Z)$ be the homogenization of~$G_n=\Fold_n[\Lie{G}_2]$. Then
\[
\overline{G}_n : \PP^2 \dashrightarrow \PP^2
\quad\text{is a rational map of degree }\left\lfloor\frac{3n}{2}\right\rfloor.
\]
If~$n\ge2$, then~$\overline{G}_n$ is not a morphism, and its
indeterminacy locus is given by
\begin{equation}
  \label{eqn:IGn}
  I(\overline{G}_n) = \begin{cases}
    \bigl\{[0,1,0]\bigr\}&\text{if $n$ is even,} \\
    \bigl\{[0,1,0],[1,0,0]\bigr\}&\text{if $n$ is odd.} \\
  \end{cases}
\end{equation}
The dynamical degree of~$\overline{G}_n$ is~$\operatorname{DynDeg}(\overline{G}_n)=n$.
\end{parts}
\end{theorem}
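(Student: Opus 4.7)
The plan is to homogenize each polynomial map $\AA^2\to\AA^2$ to a rational map $\PP^2\dashrightarrow\PP^2$ by introducing a new variable $Z$ and scaling each coordinate by a suitable power of $Z$ to reach a common total degree $d$. The resulting map is a morphism exactly when the three homogenized coordinates have no common zero in $\PP^2$; otherwise the indeterminacy locus is their common vanishing set, once any common polynomial factor has been stripped off. The essential inputs are the leading-term formulas of Propositions~\ref{proposition:Aleadingterms}, \ref{proposition:XnYnexpansionforBn}, and~\ref{proposition:XnYnGnformulas}.

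For part~(a), Proposition~\ref{proposition:Aleadingterms} gives $A_n(z)=z^n+O(n-1)$, so the degree-$n$ parts of $X_n=\Real A_n$ and $Y_n=\Imag A_n$ are the real and imaginary parts of $(x+iy)^n$. A common zero on the line at infinity $\{Z=0\}$ would force $(x+iy)^n=0$ together with its conjugate $(x-iy)^n=0$, hence $x=y=0$, which is impossible in $\PP^1$. Therefore $\overline{A}_n$ is a morphism of degree~$n$.

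For part~(b), Proposition~\ref{proposition:XnYnexpansionforBn} shows that the degree-$n$ part of $X_n$ is $x^n$ and that the coefficient of $y^n$ in $Y_n$ equals~$1$. A common zero at infinity of the leading forms then requires $x=0$; but evaluating the degree-$n$ part of $Y_n$ at $(0,y)$ returns $y^n$, which does not vanish on $\PP^1$. Hence $\overline{B}_n$ is a morphism of degree~$n$.

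For part~(c), Proposition~\ref{proposition:XnYnGnformulas} gives $\deg X_n=n$ and $\deg Y_n=d$ with $d=\lfloor 3n/2\rfloor$, so for $n\ge 2$ one has $d>n$ and may write $\overline{G}_n=[Z^{d-n}X_n^{\mathrm{hom}}:Y_n^{\mathrm{hom}}:Z^d]$. These three polynomials share no common factor, since any such factor would have to divide $Z^{d-n}$, yet neither $X_n^{\mathrm{hom}}$ nor $Y_n^{\mathrm{hom}}$ is divisible by $Z$: each restricts nontrivially to $\{Z=0\}$, namely to $x^n$ and to the nonzero leading form of $Y_n$ respectively. Hence $\overline{G}_n$ has degree exactly~$d$, and for $n\ge 2$ indeterminacy arises only at points of $\{Z=0\}$ where $Y_n^{\mathrm{hom}}(X,Y,0)=0$. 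By Proposition~\ref{proposition:XnYnGnformulas}(b) this leading form is a scalar multiple of $X^{3n/2}$ if $n$ is even and of $X^{(3n-3)/2}Y$ if $n$ is odd, yielding~\eqref{eqn:IGn}. For the dynamical degree, the identity $G_n\circ G_m=G_{mn}$ on $\AA^2$ gives $\overline{G}_n\circ\overline{G}_m=\overline{G}_{mn}$ as rational maps on $\PP^2$ (two rational maps that agree on a dense open set are equal), so iterating yields $\overline{G}_n^{\circ k}=\overline{G}_{n^k}$, which by the above has degree $\lfloor 3n^k/2\rfloor$, and consequently $\operatorname{DynDeg}(\overline{G}_n)=\lim_{k\to\infty}\bigl(\lfloor 3n^k/2\rfloor\bigr)^{1/k}=n$. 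The main subtlety is making sure $\overline{G}_n^{\circ k}$ does not lose degree through cancellation of common factors during composition, which is settled by the identification $\overline{G}_n^{\circ k}=\overline{G}_{n^k}$ together with the common-factor check already performed.
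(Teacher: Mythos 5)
Your proposal is correct and follows essentially the same route as the paper: homogenize using the leading-term formulas of Propositions~\ref{proposition:Aleadingterms}, \ref{proposition:XnYnexpansionforBn}, and~\ref{proposition:XnYnGnformulas}, read off the common zero locus of the top-degree forms at infinity, and compute the dynamical degree from $\overline{G}_n^{\circ k}=\overline{G}_{n^k}$ together with $\deg\overline{G}_m=\lfloor 3m/2\rfloor$. Your explicit check that no common factor is cancelled (so the degree is exactly $\lfloor 3n/2\rfloor$ and the iterate identity controls degree growth) is a point the paper leaves implicit, but the substance is the same.
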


\begin{proof}
(a)\enspace
Proposition~\ref{proposition:Aleadingterms} tells us that
\[
A_n(x,y) = \Bigl( \Real (x+iy)^n + O(n-1),\, \Imag (x+iy)^n + O(n-1) \Bigr),
\]
so when we homogenize, we get
\[
\overline{A}_n(X,Y,Z) =
\Bigl[
  \Real(X+iY)^n + Z\hdegdownhw{\text{homog.\ deg $n-1$}}{u(X,Y,Z)},\,
  \Imag(X+iY)^n + Z\hdegdownhw{\text{homog.\ deg $n-1$}}{v(X,Y,Z)},\,  Z^n \Bigr].
\]
Then
\begin{align*}
  \overline{A}_n(X,Y,Z) = [0,0,0]
  &\quad\Longleftrightarrow\quad
  \Real(X+iY)^n = \Imag(X+iY)^n = Z = 0 \\
  &\quad\Longleftrightarrow\quad
  (X+iY)^n = Z = 0 \\
  &\quad\Longleftrightarrow\quad
  X = Y = Z = 0.
\end{align*}
Hence~$\overline{A}_n$ is a morphism of degree~$n$.  
\par\noindent(b)\enspace
Proposition~\ref{proposition:XnYnexpansionforBn} tells us that
\[
B_n(x,y) = \Bigl( x^n + O(n-1),\,
y^n + x^2\cdot O(n-2) + O(n-1) \Bigr),
\]
so when we homogenize, we get
\[
\overline{B}_n(X,Y,Z) =
\Bigl[
  X^n + Z\hdegdownhw{\text{homog.\ deg $n-1$}}{u(X,Y,Z)},\,
  Y^n + X^2\hdegdownhw{\text{homog.\ deg $n-2$}}{v(X,Y,Z)}
  + Z\hdegdownhw{\text{homog.\ deg $n-1$}}{w(X,Y,Z)},\,
  Z^n
  \Bigr].
\]
Setting~$\overline{B}_n(X,Y,Z)=[0,0,0]$, the third coordinate
gives~$Z=0$, then the first coordinate gives~$X=0$, and then the
second coordinate gives~$Y=0$.
Hence~$\overline{B}_n$ is a morphism of degree~$n$.  
\par\noindent(c)\enspace  
Suppose first that~$n$ is even, say~$n=2k$. Then
Proposition~\ref{proposition:XnYnGnformulas} tells us that
\[
G_n(x,y) = (x^{2k}+\cdots,(-1)^k2x^{3k}+\cdots).
= \Bigl( x^{2k} + O(2k-1),\, (-1)^k2x^{3k} + O(3k-1) \Bigr).
\]
Hence
\[
\overline{G}_n(X,Y,Z) = \bigl[ X^{2k}Z^k + Z\hdegdownhw{\text{homog.\ deg $3k-1$}}{u(X,Y,Z)},
(-1)^k2X^{3k} + Z\hdegdownhw{\text{homog.\ deg $3k-1$}}{v(X,Y,Z)},
Z^{3k} \bigr].
\]
From this we can read off
\[
\deg\overline{G} = 3k =\frac{3n}{2}
\quad\text{and}\quad
I(\overline{G}_n)
= \{X=Z=0\}.
\]
\par
Next suppose that~$n=2k+1$ is odd for some~$k\ge1$. Again using
Proposition~\ref{proposition:XnYnGnformulas}, we find that
\[
G_n(x,y) = \Bigl( x^{2k+1}+O(2k),(-1)^k(2k+1)x^{3k}y+O(3k-1)\Bigr).
\]
Hence
\begin{multline*}
  \overline{G}_n(X,Y,Z) = \bigl[ X^{2k+1}Z^{k} +
    \smash[t]{ Z\hdeguphw{\text{homog.\ deg $3k$}}{u(X,Y,Z)}, } \\*
(-1)^k(2k+1)X^{3k}Y + Z\hdegdownhw{\text{homog.\ deg $3k$}}{v(X,Y,Z)},
Z^{3k+1} \bigr].
\end{multline*}
From this and the assumption that~$k\ge1$ we can read off
\begin{align*}
  \deg\overline{G} &= 3k+1 = \frac{3n-1}{2} = \left\lfloor\frac{3n}{2}\right\rfloor
  \quad\text{since $n=2k+1$ is odd,} \\
I(\overline{G}_n)
&= \{X=Z=0\} \cup \{Y=Z=0\}.
\end{align*}
\par
This completes the proof of~(c) except for the computation of the dynamical degree.
Using~$\deg\overline{G}_n=\lfloor3n/2\rfloor$, 
the limit formula definition of the dynamical degree yields
\begin{align*}
\operatorname{DynDeg}(\overline{G}_n)
&= \lim_{m\to\infty} (\deg \overline{G}_n^m)^{1/m} \\*
&= \lim_{m\to\infty} (\deg \overline{G}_{n^m})^{1/m} \\*
&= \lim_{m\to\infty} \left\lfloor \frac{3n^m}{2}\right\rfloor^{1/m}
= n.
\qedhere
\end{align*}
\end{proof}

\begin{remark}
As noted in the footnote to Theorem~\ref{theorem:FoldnLieL}, the Lie
group~$\Lie{B}_2$ admits some additional folding maps arising from the
fact that a right isosceles triangle can be folded in half to form two
right isosceles triangles.  More precisely, there is a~$\Lie{B}_2$
folding map
\[
B_{\sqrt2}(x,y)= (y,x^2-2y-4)
\quad\text{satisfying}\quad
B_{\sqrt2}^2=B_2,
\]
leading to additional folding maps via
$B_{n\sqrt2}=B_{n}\circ{B_{\sqrt2}}$. The associated homogenized map
on~$\PP^2$ is
\[
  \overline{B}_{\sqrt2}(X,Y,Z)= [YZ,X^2-2YZ-4Z^2,Z^2],
\]
which is not a morphism, since it is not defined at $[0,1,0]$.
So~$\overline{B}_{\sqrt2}$ is an example of a non-morphism whose
second iterate~$\overline{B}_2$ is a morphism.
\end{remark}

\begin{remark}
As noted in the footnote to Theorem~\ref{theorem:FoldnLieL}, the Lie
group~$\Lie{G}_2$ admits some additional folding maps arising from the
fact that a~\text{30-60-90} triangle can be folded onto
itself~\text{3-to-1}.
More precisely, there is a~$\Lie{G}_2$
folding map
\[
G_{\sqrt3}(x,y)=(y,x^3-3xy-9x-6y-12)
\quad\text{satisfying $G_{\sqrt3}^2=G_3$,}
\]
leading to additional folding maps via
$G_{n\sqrt3}=G_{n}\circ{G_{\sqrt3}}$.
The associated homogenized map on~$\PP^2$ is
\[
\overline{G}_{\sqrt3}(X,Y,Z)= [YZ^2,X^3-3XYZ-9XZ^2-6YZ^2-12Z^3,Z^3].
\]
It is a rational map with indeterminacy locus~$[0,1,0]$. 
\end{remark}


\begin{acknowledgement}
The author would like to thank Max Weinreich and Tien-Cuong Dinh for
some helpful remarks.  Various numerical and algebraic calculations
were done using Pari-GP~\cite{PariGP} and Magma~\cite{MR1484478}.  The
author's research was partially supported by Simons Collaboration
Grant \#712332.
\end{acknowledgement}



\end{document}